\numberwithin{equation}{section}
\theoremstyle{plain}
\newtheorem{thm}{Theorem}[section]
\newtheorem{lem}[thm]{Lemma}
\newtheorem{prop}[thm]{Proposition}
\newtheorem{cor}[thm]{Corollary}
\theoremstyle{definition}
\newtheorem{de}{Definition}[section]
\newtheorem{ex}{Example}[section]
\theoremstyle{remark}
\newtheorem{re}{Remark}[section]
\begin{document}
	
\title[Lower bounds for uncentered maximal functions on metric measure space]
	{Lower bounds for uncentered maximal functions on metric measure space}
	
\author{Wu-yi Pan and Xin-han Dong$^{\mathbf{*}}$}

\address{Key Laboratory of Computing and Stochastic Mathematics (Ministry of Education),
		School of Mathematics and Statistics, Hunan Normal University, Changsha, Hunan 410081, P.
		R. China}
	
\email{pwyyyds@163.com}
\email{xhdonghnsd@163.com}

\date{\today}
	
\keywords{Uncentered Hardy-Littlewood maximal operator, Besicovitch covering property, Radon measure, Lower  $L^p$-bounds.}

\thanks{The research is supported in part by the NNSF of China (Nos. 11831007, 12071125)}	
	
\thanks{$^{\mathbf{*}}$Corresponding author}
	
\subjclass[2010]{42B25}
	
\begin{abstract}
%Let $(X,d,\mu)$  be a metric measure space satisfying the Besicovitch covering property. We show that the uncentered Hardy-Littlewood maximal operators associated with $\mu$ have the uniform lower  $L^p$-bounds (independent of $\mu$) that are strictly greater than $1$, if $\mu$ satisfies a natural continuity assumption and $\mu(X)=\infty$. In addition, the same being false for measures without continuity conditions, we can construct the counterexamples where the lower bounds can be close to $1$ arbitrarily.
We show that the uncentered Hardy-Littlewood maximal operators associated with the Radon measure  $\mu$ on $\mathbb{R}^d$ have the uniform lower  $L^p$-bounds (independent of $\mu$) that are strictly greater than $1$, if  $\mu$ satisfies a mild continuity assumption and $\mu(\mathbb{R}^d)=\infty$. We actually do that in the more general context of metric measure space  $(X,d,\mu)$ satisfying the Besicovitch covering property. 
In addition, we also illustrate that the continuity condition can not be ignored by constructing counterexamples.

%In addition, we can construct counterexamples so that the continuity condition can not be ignored.

%AaThe conclusion fails in general for removing the continuity condition, and indeed 
\end{abstract}
	
\maketitle
\section{ Introduction } 

Let $Mf$ be the uncentered Hardy-Littlewood maximal function of $f$, where $f$ is a $p$-th power Lebesgue integrable function for $p>1$. Maximal function plays at least two roles. Firstly it provides an important example of a sub-linear operator used in real analysis and harmonic analysis. Secondly, $Mf$ is apparently comparable to the original function $f$ in the $L^p$ sense. Not only that, by Riesz's sunrise lemma, Lerner \cite{Le10} proved for the real line that \begin{equation*}
	\Vert Mf\Vert_{L^p(\mathbb{R})} \geq \left(\frac{p}{p-1}\right)^{\frac{1}{p}}\Vert f\Vert_{L^p(\mathbb{R})}.
\end{equation*}
And Ivanisvili et al. found the lower bound of the high dimensional Euclidean space in \cite{IJN17}. 

One may suspect whether in general metric measure space $(X,d,\mu)$, for given $1<p<\infty$, there exists a constant $\varepsilon_{p,d,\mu}>0$ such that 
\begin{equation*}
	\Vert M_\mu f\Vert_{L^p(\mu)} \geq (1+\varepsilon_{p,d,\mu})\Vert f\Vert_{L^p(\mu)}, \quad\text{for all $f\in L^p(\mu)$},
\end{equation*}
where $M_\mu$ (cf. \eqref{de:max} below for the definition) is the uncentered Hardy-Littlewood maximal operator associated with $\mu$.
Unfortunately, such a measure must not be finite. In fact, $1_{X}$ is a fixed point of $M_\mu$ in $L^p(\mu)$ if $\mu(X)< \infty$. This suggests that we only need to consider infinite measures.

One of our main results is the following.

\begin{thm}\label{thm:1.1}
	Let $(X,d,\mu)$  be a metric measure space satisfying the Besicovitch covering property with constant $L$. Suppose that \begin{equation}\label{con:1.1}
		\mu(\{x\in \operatorname {supp} (\mu): r\mapsto \mu(B(x,r))\; \text{is discontinuous}\})=0
	\end{equation} and $\mu(X)=\infty$. Then 
	\begin{equation}\label{ineq:main}
		\Vert M_\mu f \Vert_p \geq \left(1+\frac{1}{(p-1)L}\right)^{\frac{1}{p}}\Vert f\Vert_p, \quad \text{for all}\;\; f\in L^p(\mu).
	\end{equation}
\end{thm}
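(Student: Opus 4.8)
The plan is to deduce the $L^p$ lower bound from a pointwise lower bound on the distribution function of $M_\mu f$, integrated through the layer-cake formula. After replacing $f$ by $|f|$ we may assume $f\ge 0$. Writing $E_\lambda=\{M_\mu f>\lambda\}$ and $F_\lambda=\{f>\lambda\}$, the whole theorem reduces to the distributional estimate
\begin{equation*}
\mu(E_\lambda)\ \ge\ \mu(F_\lambda)+\frac{1}{L\lambda}\int_{F_\lambda}(f-\lambda)\,d\mu,\qquad\text{for a.e.\ }\lambda>0,
\end{equation*}
because inserting it into $\Vert M_\mu f\Vert_p^p=p\int_0^\infty\lambda^{p-1}\mu(E_\lambda)\,d\lambda$, using $p\int_0^\infty\lambda^{p-1}\mu(F_\lambda)\,d\lambda=\Vert f\Vert_p^p$, and invoking the elementary identity $\int_0^{f}\lambda^{p-2}(f-\lambda)\,d\lambda=\frac{f^{p}}{p(p-1)}$ produces exactly $\Vert M_\mu f\Vert_p^p\ge\bigl(1+\frac{1}{(p-1)L}\bigr)\Vert f\Vert_p^p$.

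To establish the displayed estimate I would, for each fixed $\lambda$ and each \emph{good} center $x\in F_\lambda$, grow a ball about $x$ and stop when its $f$-average first equals $\lambda$. This is where all three hypotheses enter. By the continuity condition \eqref{con:1.1}, for $\mu$-a.e.\ $x\in\operatorname{supp}(\mu)$ the map $r\mapsto\mu(B(x,r))$, and hence $r\mapsto\frac{1}{\mu(B(x,r))}\int_{B(x,r)}f\,d\mu$, is continuous and every sphere about $x$ is $\mu$-null; the Besicovitch property supplies the Lebesgue differentiation theorem, so this average tends to $f(x)>\lambda$ as $r\to0$; and since $\mu(X)=\infty$ while $f\in L^p(\mu)$, Hölder's inequality gives $\frac{1}{\mu(B(x,r))}\int_{B(x,r)}f\,d\mu\le\Vert f\Vert_p\,\mu(B(x,r))^{-1/p}\to0$ as $r\to\infty$. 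By the intermediate value theorem there is a radius $r_x$ with $\frac{1}{\mu(B(x,r_x))}\int_{B(x,r_x)}f\,d\mu=\lambda$, and the nullity of spheres guarantees that $B_x:=B(x,r_x)$ satisfies $B_x\subseteq\{M_\mu f\ge\lambda\}$ up to a $\mu$-null set, this containment being exactly where the \emph{uncentered} nature of $M_\mu$ is used.

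The defining property of $B_x$ gives $\int_{B_x}(f-\lambda)\,d\mu=0$, i.e.\ the excess equals the deficit, $\int_{B_x}(f-\lambda)_+\,d\mu=\int_{B_x}(\lambda-f)_+\,d\mu$. Applying the Besicovitch covering property to $\{B_x\}_{x\in F_\lambda}$ yields a countable subfamily $\{B_i\}$ covering $F_\lambda$ with $\sum_i\mathbf{1}_{B_i}\le L$. Since $(f-\lambda)_+=f-\lambda$ on $F_\lambda\subseteq\bigcup_iB_i$, summing the excesses, swapping each to the corresponding deficit, and using the bounded overlap gives $\int_{F_\lambda}(f-\lambda)\,d\mu\le\sum_i\int_{B_i}(\lambda-f)_+\,d\mu\le L\int_{\bigcup_iB_i}(\lambda-f)_+\,d\mu$. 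Because each $B_i\subseteq\{M_\mu f\ge\lambda\}$ modulo a null set while $(\lambda-f)_+$ is supported where $f<\lambda$ and is bounded there by $\lambda$, the last integral is at most $\lambda\,\mu(\{M_\mu f\ge\lambda\}\setminus F_\lambda)$; replacing $\{M_\mu f\ge\lambda\}$ by $E_\lambda$ (they differ for only countably many $\lambda$, hence harmlessly inside the $\lambda$-integral) delivers the distributional estimate.

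I expect the main obstacle to be the stopping-radius construction together with the passage between $\{M_\mu f\ge\lambda\}$ and $\{M_\mu f>\lambda\}$: one must verify measurability of $x\mapsto r_x$ so that the covering property genuinely applies, confirm that the average is truly continuous rather than merely one-sided continuous (this is precisely what \eqref{con:1.1} buys, via nullity of spheres), and secure the containment $B_x\subseteq\{M_\mu f\ge\lambda\}$ off a null set so that all the deficit mass is confined to $E_\lambda\setminus F_\lambda$. The counterexamples announced in the abstract should pinpoint \eqref{con:1.1} as indispensable exactly here, since without it the average may jump across the value $\lambda$ and no stopping ball with mean exactly $\lambda$ need exist.
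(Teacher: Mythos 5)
Your overall scheme is the same as the paper's: your stopping-ball construction, the excess-equals-deficit identity $\int_{B_x}(f-\lambda)\,d\mu=0$, and the layer-cake integration are exactly the Ivanisvili--Jaye--Nazarov argument the paper follows (your distributional inequality is the paper's identity $\int_X t\psi(x,t)\,d\mu=\int_X \psi(x,t)f(x)\,d\mu$ together with properties (1)--(2) of $\psi$, repackaged). But there is a genuine gap at the covering step: you apply the Besicovitch covering property to the family $\{B_x\}_{x\in F_\lambda}$ without verifying that the stopping radii are uniformly bounded. In this paper BCP is only hypothesized for covers of the form $\{B(x,r):x\in A,\ 0<r<R\}$ with a \emph{finite} $R$, and for a general $f\in L^p(\mu)$ the bound $\sup_{x\in F_\lambda} r_x<\infty$ can genuinely fail: on $(\mathbb{R},|\cdot|)$ take $\mu$ with continuous density equal to $1$ on tiny intervals around points $x_n\to\infty$ and nearly $0$ on adjacent gaps of length $T_n\to\infty$, and let $f=2\lambda$ on the $n$-th interval (with masses summable so $f\in L^p$). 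The average about $x_n$ stays near $2\lambda$ until the ball has absorbed comparable mass from across the gap, so the first radius at which the average hits $\lambda$ is of order $T_n$, unbounded in $n$. Your H\"older tail bound $\langle f\rangle_{B(x,r)}\le \Vert f\Vert_p\,\mu(B(x,r))^{-1/p}$ controls the \emph{measure} of each stopping ball uniformly, but not its radius, and gives no uniformity in $x$; it cannot substitute for this step.

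The paper closes exactly this hole with machinery you omitted: Lemma \ref{lem:3.4} (resting on Lemma \ref{Lem:3.1}, Corollary \ref{cor:3.2} and Lemma \ref{lem:3.3}) first reduces the inequality to nonnegative bounded functions with bounded support of finite measure, so that $f\in L^1(\mu)$; then Lemma \ref{lem:3.5} gives $\mu(B(x_n,r_n))\to\infty$ whenever the centers stay bounded and $r_n\to\infty$, whence $\langle f\rangle_{B(x_n,r_n)}\le \Vert f\Vert_1/\mu(B(x_n,r_n))\to 0$, proving $R_t<\infty$ by contradiction. With that reduction and the $R_t<\infty$ verification inserted, your argument goes through verbatim. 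Two smaller remarks: your worry about measurability of $x\mapsto r_x$ is moot, since BCP as defined applies to arbitrary (possibly non-measurable) covers indexed by points of $A$; and the containment $B_x\subseteq\{M_\mu f\ge\lambda\}$ is exact rather than merely up to a null set, since every $y\in B_x$ satisfies $M_\mu f(y)\ge\langle f\rangle_{B_x}=\lambda$.
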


Since any normed space  $(\mathbb{R}^d,\Vert\cdot\Vert)$ has the Besicovitch covering property with a constant which equals its strict Hadwiger number $H^*(\mathbb{R}^d,\Vert\cdot\Vert)$, we derive

\begin{cor}\label{cor:1.2}
		Let $\Vert\cdot\Vert$ be any norm on $\mathbb{R}^d$. Let $\mu$ be a Radon measure on $(\mathbb{R}^d,\Vert\cdot\Vert)$ such that
	$\mu(\mathbb{R}^d)=\infty.$ Suppose that \begin{equation*}
		\mu(\{x\in \operatorname {supp} (\mu): r\mapsto \mu(B(x,r))\; \text{is discontinuous}\})=0.
	\end{equation*} Then 
	\begin{equation*}
		\Vert M_\mu f \Vert_p \geq \left(1+\frac{1}{(p-1)H^*(\mathbb{R}^d,\Vert\cdot\Vert)}\right)^{\frac{1}{p}}\Vert f\Vert_p,\quad \text{for all $f\in L^p(\mu)$}.
	\end{equation*}
\end{cor}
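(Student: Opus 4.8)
The plan is to prove a single distributional inequality and then integrate it against the layer-cake formula. Normalizing $f\ge 0$ (since $M_\mu f=M_\mu|f|$) and writing $\Omega_\lambda=\{M_\mu f>\lambda\}$ and $F_\lambda=\{f>\lambda\}$, note $F_\lambda\subseteq\Omega_\lambda$ because $M_\mu f\ge f$ a.e. The target is the following key lemma: for a.e.\ $\lambda>0$,
\begin{equation*}
\mu(\Omega_\lambda)\ \ge\ \mu(F_\lambda)+\frac{1}{L\lambda}\int_{F_\lambda}(f-\lambda)\,d\mu .
\end{equation*}
Granting it, I would finish via $\|M_\mu f\|_p^p=p\int_0^\infty\lambda^{p-1}\mu(\Omega_\lambda)\,d\lambda$ and Fubini:
\begin{align*}
\|M_\mu f\|_p^p
&\ge p\int_0^\infty\lambda^{p-1}\mu(F_\lambda)\,d\lambda+\frac{p}{L}\int_0^\infty\lambda^{p-2}\int_{F_\lambda}(f-\lambda)\,d\mu\,d\lambda\\
&=\|f\|_p^p+\frac{p}{L}\int_X\Big(\int_0^{f(x)}\lambda^{p-2}\big(f(x)-\lambda\big)\,d\lambda\Big)\,d\mu(x)\\
&=\|f\|_p^p+\frac{p}{L}\int_X\frac{f(x)^p}{p(p-1)}\,d\mu(x)
=\Big(1+\frac{1}{(p-1)L}\Big)\|f\|_p^p,
\end{align*}
and \eqref{ineq:main} follows by taking $p$-th roots. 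Thus the whole theorem reduces to the key lemma, and the exponents appearing in \eqref{ineq:main} are exactly what this computation produces.

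To prove the key lemma I would run a sunrise/stopping argument built on the Besicovitch property. The first step is: for a.e.\ $x\in F_\lambda$ there is a radius $r_x>0$ with
\begin{equation*}
A(x,r_x):=\frac{1}{\mu(B(x,r_x))}\int_{B(x,r_x)}f\,d\mu=\lambda .
\end{equation*}
Indeed, by the Lebesgue differentiation theorem $A(x,r)\to f(x)>\lambda$ as $r\to0^+$ for a.e.\ $x$, while H\"older's inequality gives $A(x,r)\le\|f\|_p\,\mu(B(x,r))^{-1/p}\to0$ since $\mu(B(x,r))\uparrow\mu(X)=\infty$; the continuity hypothesis \eqref{con:1.1} makes $r\mapsto\mu(B(x,r))$, hence $r\mapsto A(x,r)$, continuous for a.e.\ $x\in\operatorname{supp}(\mu)$, so the intermediate value theorem supplies $r_x$. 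Applying the Besicovitch covering property to the centered family $\{B(x,r_x):x\in F_\lambda\}$ yields a countable subfamily $\{B_i\}$ covering $F_\lambda$ up to a null set with overlap $\sum_i\mathbf 1_{B_i}\le L$. Each $B_i$ satisfies $\int_{B_i}(f-\lambda)\,d\mu=0$, and since $f>\lambda$ on $F_\lambda$ while $0\le\lambda-f\le\lambda$ on $B_i\setminus F_\lambda$, splitting the ball integral gives
\begin{equation*}
\int_{B_i\cap F_\lambda}(f-\lambda)\,d\mu=\int_{B_i\setminus F_\lambda}(\lambda-f)\,d\mu\le\lambda\,\mu(B_i\setminus F_\lambda).
\end{equation*}
Summing over $i$, using $\sum_i\mathbf 1_{B_i}\ge1$ on $F_\lambda$ on the left and the overlap bound $\sum_i\mu(B_i\setminus F_\lambda)\le L\,\mu(\Omega_\lambda\setminus F_\lambda)$ on the right, produces $\int_{F_\lambda}(f-\lambda)\,d\mu\le L\lambda\,\mu(\Omega_\lambda\setminus F_\lambda)$, which rearranges to the key lemma via $\mu(\Omega_\lambda)=\mu(F_\lambda)+\mu(\Omega_\lambda\setminus F_\lambda)$.

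I expect the main obstacle to be this key lemma, and within it two points deserve care. The conceptually decisive one is the bookkeeping that yields the \emph{additive} $\mu(F_\lambda)$ term rather than merely $\mu(\Omega_\lambda)\ge\frac{1}{L\lambda}\int_{F_\lambda}f$, which is too weak once $L>1$: the gain comes from charging only the excess $f-\lambda$ on $F_\lambda$ against genuinely new territory $B_i\setminus F_\lambda\subseteq\Omega_\lambda\setminus F_\lambda$, and this is exactly where the identity $\int_{B_i}(f-\lambda)\,d\mu=0$ — that the stopping balls have average \emph{equal} to $\lambda$ — is essential. The technical point is the continuity input: \eqref{con:1.1} is precisely what guarantees $r\mapsto A(x,r)$ is continuous a.e., so that the intermediate value theorem (not just a one-sided inequality) is available, while $\mu(X)=\infty$ is what forces $A(x,r)\to0$ so the average actually descends through $\lambda$; this dovetails with the counterexamples showing continuity cannot be dropped. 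Finally, since the stopping balls only give $M_\mu f\ge\lambda$ on $B_i$, I would phrase the argument with $\{M_\mu f\ge\lambda\}$ and $\{f\ge\lambda\}$ and observe that these coincide with $\Omega_\lambda$ and $F_\lambda$ for all but countably many $\lambda$, leaving the layer-cake integral unchanged.
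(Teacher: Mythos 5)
Your argument is essentially a distributional repackaging of the paper's own proof of Theorem \ref{thm:1.1} (the Ivanisvili--Jaye--Nazarov stopping-ball method): your identity $\int_{B_i}(f-\lambda)\,d\mu=0$ and the layer-cake computation correspond exactly to the paper's function $\psi(x,t)$ and its properties (1)--(3), and the final arithmetic is correct. But there is a genuine gap at the step where you ``apply the Besicovitch covering property to the centered family $\{B(x,r_x):x\in F_\lambda\}$.'' BCP, as defined in the paper (and as it must be), applies only to covers whose radii are uniformly bounded by some finite $R$, and for a general $f\in L^p(\mu)$ your stopping radii need not be bounded: H\"older only gives $\mu(B(x,r_x))\leq(\Vert f\Vert_p/\lambda)^p$, which bounds the \emph{measure} of the stopping balls, not their radii. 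If $\mu$ is thin far from the origin (on $\mathbb{R}$, take Lebesgue measure on $(-\infty,0]$ plus small continuous bumps near points $a_n\to\infty$) and $f$ has unbounded support living on the bumps, then centers $x\in F_\lambda$ escape to infinity and $r_x\to\infty$. Bounded-overlap extraction genuinely fails for families with unbounded radii: in $\mathbb{R}$ the balls $B(n,2n)$, $n\in\mathbb{N}^+$, all contain the origin, so any subfamily covering $\mathbb{N}^+$ has unbounded overlap there. The paper closes precisely this hole by first reducing, via monotone approximation (Lemma \ref{lem:3.4}, resting on Lemma \ref{Lem:3.1} and Corollary \ref{cor:3.2}), to bounded $f$ with bounded support and $\mu(\operatorname{supp}(f))<\infty$, and then proving $R_t=\sup\{r:\langle f\rangle_{B(x,r)}=t\}<\infty$ using Lemma \ref{lem:3.5} together with $\langle f\rangle_{B(x_n,r_n)}\leq\Vert f\Vert_1/\mu(B(x_n,r_n))\to0$. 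Your proof needs this reduction (or a substitute) before the covering step; as written, the covering hypothesis is simply not met.

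Separately, your argument proves the inequality with an abstract Besicovitch constant $L$ and never produces the constant actually appearing in the statement of Corollary \ref{cor:1.2}, namely the strict Hadwiger number $H^*(\mathbb{R}^d,\Vert\cdot\Vert)$. Given Theorem \ref{thm:1.1}, the entire content of the paper's proof of the corollary is the identification of the Besicovitch constant of a normed space $(\mathbb{R}^d,\Vert\cdot\Vert)$ with $H^*(\mathbb{R}^d,\Vert\cdot\Vert)$, quoted from \cite[Theorem 3.2]{Al21}; without citing or proving this, you obtain a bound with some unspecified $L$ rather than the stated one. (You should also record the routine fact that a Radon measure on $\mathbb{R}^d$ is a metric measure space in the paper's sense, so that BCP yields the differentiation theorem you invoke for the existence of stopping radii.) Your closing remark about replacing $\{M_\mu f>\lambda\}$ by $\{M_\mu f\geq\lambda\}$ for all but countably many $\lambda$ is fine and handles a point the paper treats differently, but it does not affect either gap above.
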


Now let us make a few remarks about the condition of \eqref{con:1.1}. By Lebesgue decomposition theorem, any Radon measure can be divided into three main parts: the absolutely continuous part; the singular continuous part; the discrete measure part. If its discrete measure part exists (that is, measure is not continuous or has atoms), then it must not satisfy \eqref{con:1.1}. But if it contains only the absolutely continuous part, the condition is true. 

Indeed, Theorem \ref{thm:1.1} does not hold in general for infinite Radon measures. We can construct a measure containing atoms to show that \eqref{ineq:main} is invalid.

On the other hand, if $d=1,2$, then \eqref{con:1.1} is satisfied for every Radon measure on the Euclidean space $(\mathbb{R}^d,\Vert\cdot\Vert_2)$ having no atoms. Hence for this metric measure space, we have $\Vert M_\mu f\Vert_p >\Vert f\Vert_p$ for all $f\in L^p$. More precisely, we prove

\begin{thm}\label{thm:1.3}
	If $\mu$ is a Radon continuous measure  on $(\mathbb{R}^2,\Vert\cdot\Vert_2)$ such that $\mu(\mathbb{R}^2)=\infty$, then 	
	\begin{equation*}
		\Vert M_\mu f \Vert_p \geq \left(1+\frac{1}{5(p-1)}\right)^{\frac{1}{p}}\Vert f\Vert_p,\quad\text{for all $f\in L^p(\mu)$}.
	\end{equation*}
\end{thm}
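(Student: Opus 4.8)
The plan is to deduce Theorem \ref{thm:1.3} from Corollary \ref{cor:1.2} by verifying two facts about the Euclidean plane. First, that the strict Hadwiger number satisfies $H^*(\mathbb{R}^2,\Vert\cdot\Vert_2)=5$; second, that every continuous (atomless) Radon measure $\mu$ on $(\mathbb{R}^2,\Vert\cdot\Vert_2)$ automatically satisfies the continuity hypothesis \eqref{con:1.1}. Granting these, substituting $H^*=5$ into Corollary \ref{cor:1.2} produces exactly the constant $\bigl(1+\frac{1}{5(p-1)}\bigr)^{1/p}$, which is the assertion.

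For the second fact, which I regard as the heart of the matter, I would first translate \eqref{con:1.1} into a statement about circles. Since $B(x,r)$ is an open ball, $r\mapsto\mu(B(x,r))$ is left-continuous, and its jump at a radius $r_0$ equals $\mu(\overline{B(x,r_0)})-\mu(B(x,r_0))=\mu(\partial B(x,r_0))$. Thus the discontinuity set in \eqref{con:1.1} is
\[
  D=\{x\in\operatorname{supp}(\mu):\ \mu(\partial B(x,r))>0\ \text{for some }r>0\},
\]
and it suffices to show $\mu(D)=0$. For each $x\in D$ I would fix one radius $r_x$ with $\mu(\partial B(x,r_x))>0$, obtaining a family of circles each of positive measure.

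The key geometric input, special to the plane, is that two distinct circles in $\mathbb{R}^2$ meet in at most two points. Since $\mu$ is atomless, any two distinct circles intersect in a $\mu$-null set, so the chosen circles are pairwise almost disjoint. A Radon measure on $\mathbb{R}^2$ is $\sigma$-finite, and in a $\sigma$-finite space a family of pairwise almost-disjoint sets of positive measure must be countable (cover $\mathbb{R}^2$ by sets of finite measure and, on each, bound by the total mass the number of almost-disjoint pieces of measure exceeding $1/k$). Hence there are only countably many distinct circles of positive measure, so only countably many centers, so $D$ is countable; atomlessness then gives $\mu(D)=0$, which is \eqref{con:1.1}.

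The main obstacle is precisely this global control of the centers: \eqref{con:1.1} quantifies over all $x$ simultaneously, and the reduction succeeds only because the planar intersection bound forces the positive-measure circles to be countable. This is exactly where $d\le 2$ is essential—in $\mathbb{R}^3$ two spheres can meet along an entire circle, which a singular continuous measure may charge, and the countability argument collapses; when $d=1$ the argument is trivial, since the ``spheres'' are two-point sets and hence $\mu$-null for atomless $\mu$. Finally, the value $H^*(\mathbb{R}^2,\Vert\cdot\Vert_2)=5$ is a purely geometric computation, reflecting that at most five non-overlapping translates of a disk can touch it under the strict condition (the angular separation of their centers must exceed $60^\circ$); I would quote this, after which Corollary \ref{cor:1.2} closes the proof.
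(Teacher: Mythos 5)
Your proposal is correct and follows essentially the same route as the paper: reduce to Corollary \ref{cor:1.2}, verify hypothesis \eqref{con:1.1} for atomless planar Radon measures by noting that discontinuities correspond to circles of positive measure, that distinct circles meet in at most two points (hence are pairwise $\mu$-almost disjoint when $\mu$ is atomless), and that such a family must be countable, and finally quote $H^*(\mathbb{R}^2,\Vert\cdot\Vert_2)=5$ (the paper cites Sullivan) — this is exactly the content of the paper's Lemma \ref{lem:4.1} and its derivation of Theorem \ref{thm:1.3}. The only (cosmetic) difference is in the countability step: you invoke the general fact that in a $\sigma$-finite space a pairwise almost-disjoint family of positive-measure sets is countable, whereas the paper pigeonholes the circles explicitly by mass, radius, and center location into sets $E_{i,j,s}$ and contradicts local finiteness on a bounded ball; the two arguments are equivalent.
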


%Nevertheless, it is false in three dimension or above. We can even find a singular continuous measure that the lower operator norm of $M_\mu f$ arbitrarily approaches to $1$.

It is worth noting that we formulate and prove Lerner's theorem in a more general setting which remains true. 

\begin{thm}\label{thm:1.4}
	If $\mu$ is a Radon continuous measure  on $(\mathbb{R},\Vert\cdot\Vert_2)$ such that $\mu(\mathbb{R})=\infty$, then 	
	\begin{equation*}
		\Vert M_\mu f \Vert_p \geq \left(\frac{p}{p-1}\right)^{\frac{1}{p}}\Vert f\Vert_p,\quad\text{for all $f\in L^p(\mu)$}.
	\end{equation*}
\end{thm}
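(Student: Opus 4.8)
The plan is to reduce to one-sided maximal operators and run Riesz's rising sun lemma, the infinitude of the mass being exactly what forces the crucial boundary term to have the favorable sign. First I would reduce to $f\ge 0$ (replace $f$ by $|f|$) and introduce the two one-sided uncentered operators
\begin{equation*}
M^+f(x)=\sup_{y>x}\frac{1}{\mu((x,y])}\int_{(x,y]}f\,d\mu,\qquad M^-f(x)=\sup_{y<x}\frac{1}{\mu([y,x))}\int_{[y,x)}f\,d\mu,
\end{equation*}
which satisfy $M_\mu f\ge\max(M^+f,M^-f)$ and, by the Lebesgue differentiation theorem for $\mu$, $M^\pm f\ge f$ $\mu$-a.e. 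Since $\mu(\mathbb{R})=\mu((-\infty,0])+\mu((0,\infty))=\infty$, at least one of these half-lines carries infinite mass; reflecting $x\mapsto-x$ if necessary I may assume $\mu((-\infty,0])=\infty$ and work throughout with $M^+$.

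The heart of the matter is the distributional inequality $\lambda\,\mu(\{M^+f>\lambda\})\ge\int_{\{M^+f>\lambda\}}f\,d\mu$ for each $\lambda>0$. To prove it I would fix $\lambda$ and set $H(t)=\int_{(0,t]}(f-\lambda)\,d\mu$ for $t\ge 0$ and $H(t)=-\int_{(t,0]}(f-\lambda)\,d\mu$ for $t<0$; this is finite since $\mu$ is Radon and $f\in L^p_{\mathrm{loc}}(\mu)$, and continuous since $\mu$ is non-atomic, which is exactly where the continuity hypothesis enters. Unwinding the definition shows $M^+f(x)>\lambda$ iff there is $y>x$ with $H(y)>H(x)$, so $\{M^+f>\lambda\}$ is precisely the rising-sun set $E=\{x:\exists\,y>x,\ H(y)>H(x)\}$, an open set and hence a countable disjoint union of intervals $(a_k,b_k)$. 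On a bounded component Riesz's lemma gives $H(a_k)=H(b_k)$, i.e. $\int_{(a_k,b_k)}(f-\lambda)\,d\mu=0$. A Chebyshev estimate using $\mu(\{f>\lambda/2\})\le(2/\lambda)^p\Vert f\Vert_p^p<\infty$ together with $\mu((-\infty,0])=\infty$ forces $H(t)\to+\infty$ as $t\to-\infty$, which rules out any left-unbounded component $(-\infty,b_k)$, the only kind with the wrong sign. A right-unbounded component $(a_k,\infty)$, if present, has $a_k\notin E$, whence $H(a_k)\ge\sup_{y>a_k}H(y)$ and $\int_{(a_k,\infty)}(f-\lambda)\,d\mu\le 0$. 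Summing over all components yields $\int_E(f-\lambda)\,d\mu\le 0$, which is the claimed inequality.

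With this in hand the $L^p$ bound is a layer-cake computation:
\begin{equation*}
\Vert M^+f\Vert_p^p=p\int_0^\infty\lambda^{p-1}\mu(\{M^+f>\lambda\})\,d\lambda\ge p\int_0^\infty\lambda^{p-2}\int_{\{M^+f>\lambda\}}f\,d\mu\,d\lambda=\frac{p}{p-1}\int_X f\,(M^+f)^{p-1}\,d\mu,
\end{equation*}
where the middle step is the distributional inequality and the last is Fubini. Since $M^+f\ge f\ge 0$ and $p-1>0$ we have $(M^+f)^{p-1}\ge f^{p-1}$, so the right-hand side is at least $\frac{p}{p-1}\int_X f^p\,d\mu=\frac{p}{p-1}\Vert f\Vert_p^p$. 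Combining with $M_\mu f\ge M^+f$ gives $\Vert M_\mu f\Vert_p\ge\left(p/(p-1)\right)^{1/p}\Vert f\Vert_p$, as claimed. If $\Vert M^+f\Vert_p=\infty$ the conclusion is trivial, so one may assume all quantities finite.

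The main obstacle is precisely the sign analysis of the unbounded rising-sun components: for a finite measure the whole line can be a single component with the wrong sign (reflecting that $1_X$ is a fixed point of $M_\mu$), and it is only the infinitude of the mass on the chosen half-line that eliminates the dangerous left-unbounded component while leaving the harmless right-unbounded one. Verifying the $t\to-\infty$ behaviour of $H$ and the bounded-component equality of the rising-sun lemma in the non-atomic $\mu$-setting is the one genuinely delicate point; the remainder is the standard Hardy--Littlewood bookkeeping.
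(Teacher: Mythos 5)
Your proof is correct and follows essentially the same route as the paper: reduce to a one-sided maximal operator on the half-line of infinite mass, establish the Riesz rising-sun distributional identity $\lambda\,\mu(\{M^+f>\lambda\})\geq\int_{\{M^+f>\lambda\}}f\,d\mu$ (the paper cites this as Theorem 1 of Ephremidze--Fujii--Terasawa, verifying its $\liminf$ hypothesis for compactly supported $f$ via $\mu((-\infty,-R))=\infty$, whereas you reprove it inline via the rising-sun set of $H$), and then conclude by the layer-cake/Fubini computation together with $M^+f\geq f$ a.e. Your Chebyshev--H\"older argument showing $H(t)\to+\infty$ as $t\to-\infty$ neatly replaces the paper's reduction to compactly supported functions (its Lemma 4.3), so your version works directly for all $f\in L^p(\mu)$; the sign analysis of the unbounded components, which you rightly flag as the delicate point, does go through by the standard rising-sun argument (on a left-unbounded component one has $H(x)\leq H(b_k)$ for all $x<b_k$, contradicting $H\to+\infty$, and the case $E=\mathbb{R}$ is excluded by taking a maximizer of $H$ on $[-n,0]$ with $n$ large).
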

Besides, the lower $L^p$-bounds of the maximal operator with respect to a more general measure were discussed and we extended the results of Theorem \ref{thm:1.4} to a class of measures containing only one or two  atoms. The proof depends on our adopted approach from a variant of Theorem \ref{thm:1.1},  which differs from the traditional one of Theorem \ref{thm:1.4}. It also means that the condition of \eqref{con:1.1} is not necessary.

The paper is organized as follows.
In Section \ref{S2}, we give some definitions and basic properties of metric measure spaces $(X,d,\mu)$ and the corresponding maximal operators. In Section \ref{S3}, we  prove Theorem \ref{thm:1.1} and Corollary \ref{cor:1.2}. In Section \ref{S4}, we mainly restrict our attention to the case of Radon measures in Euclidean space; the counterexample and other theorems mentioned above are presented.
$\\ \hspace*{\fill} \\$

\section{Preliminaries}\label{S2}

Throughout the  whole paper, we use the same notation as \cite{Al21}. If not declared specifically, the symbol $B^{cl}(x,r):=\{y\in X: d(x,y)\leq r\}$ denotes closed balls, and $B^o(x,r):=\{y\in X: d(x,y)< r\}$ to refer to open balls.   Because most of the definitions in this paper do not depend on the selection of open and closed balls, we generally use $B(x,r)$ to denote it, but it should be noted that all balls are taken to be of the same kind when we utilize $B(x,r)$ once. When referring to $B$, suppose that its radius and center have been determined.

\begin{de}
	A measure on a topological space is Borel if it is defined on the $\sigma$-algebra generated by all open sets. 
\end{de}
Sometimes we need to study problems on a $\sigma$-algebra larger than Borel $\sigma$-algebra, so we definite Borel semi-regular measure.
\begin{de}
	A measure space $(X,\mathcal{A},\mu)$ on a topological space is Borel semi-regular measure if for every $A\in
	 \mathcal{A}$, there exists a Borel set $B$ such that $\mu((A\backslash B)\bigcup (B\backslash A))=0$.
\end{de}
In general, the maximal operator problem studied in this paper has no essential difference between these two kinds of measures. Therefore, although our conclusion is valid for general Borel semi-regular measure, the proofs are always carried out under the assumption of Borel measure. We will explain this reason in the proof of Lemma \ref{lem:3.4}.

\begin{de}\cite[Definition 7.2.1]{Bo07}
	A Borel measure is $\tau$-additive, if for every collection $\mathcal{B}=\{U_{\lambda}:\lambda\in \Lambda\}$ of open sets, 
	\begin{equation*}
		\mu\left(\bigcup_{\lambda}U_\lambda\right)=\sup_{\mathcal{F}}\mu\left(\bigcup_{i=1}^nU_{\lambda_i}\right)
	\end{equation*} 
where the supremum is taken over all finite subcollections $\mathcal{F}\subset \mathcal{B}$. A Borel measure on a metric space is locally finite if there exists an $r > 0$ such that $\mu(B(x,r))<\infty$ for every $x\in X$.
\end{de}
Recall that the complement of the support of the Borel measures $(\operatorname {supp} (\mu))^c:=\bigcup\{B(x,r):x\in X,\mu(B(x,r))=0\}$ is open, and hence measurable. If $\mu$ is $\tau$-additive, we immediately know from its definition that $\mu$ is full support, i.e., $\mu(X\backslash\operatorname {supp} (\mu))=0$. 

We also note that if a metric space is separable, then it is second countable, and hence $\tau$-additivity holds for all Borel measures. 

In particular, locally finite  Borel regular measures
in complete separable metric spaces are equivalent to Radon measures, see e.g. Schwartz \cite[Part I, §11.3]{Sc73}. Note that our definition of locally finite is different from that of Aldaz \cite[Definition  2.1]{Al21}, and we need to identify carefully. Locally finiteness in the sense of Aldaz refers to that if each bounded set has finite measure.

\begin{de}
	We say that $(X,d,\mu)$ is a metric measure space if $\mu$ is a Borel semi-regular measure and its restriction on Borel set is $\tau$-additive and locally finite. 
\end{de}

Such a measure is undoubtedly $\sigma$-finite. A function $f\in L^{1}_{\operatorname {loc}}(\mu)$ is defined to be local integral if $\int_B|f|d\mu<\infty$ for each ball $B$. 
For $1\leq p<\infty$, we define 
$
	L^{p}_{\operatorname {loc}}(\mu):=\{f:|f|^p\in L^{1}_{\operatorname {loc}}(\mu)\}.
$  

Recall the  \emph{uncentered Hardy-Littlewood maximal operator} acting on a locally integrable function $f$ by
\begin{equation}\label{de:max}
	M_{\mu}f(x):=\sup_{x\in  B;\mu(B)>0}\frac{1}{\mu(B)}\int_{B}|f|d\mu, \quad x\in X.
\end{equation} We  will denote $M_\mu f$ briefly by  $Mf$ when no confusion can arise. It may be checked that for any $f\in L^{1}_{\operatorname {loc}}(\mu)$, $Mf$ is lower semicontinuous, hence it is Borel measurable. By approximation, it is insignificant in the definition whether one takes the balls $B(x,r)$ to be open or closed. These were shown by Stempak and Tao (see \cite[Lemma  3]{ST14}).

Now we are in the position of the definition of Besicovitch covering property. 

\begin{de}
	 We say that $(X,d)$ satisfies the Besicovitch covering property (BCP) if there exists a constant
	$L\in \mathbb{N}^+$ such that for every $R>0$, every set $A\subset X$, and every cover $\mathcal{B}$ of $A$ given by
	\begin{equation*}
		\mathcal{B}=\{B(x,r):x\in A, 0<r<R\},
	\end{equation*} then there is a countable subfamily $\mathcal{F}\subset \mathcal{B}$ such that the balls in $\mathcal{F}$ cover $A$, and every point in $X$ belongs to at most $L$
	balls in $\mathcal{F}$, that is,
\begin{equation*}
		\mathbf{1}_{A}\leq \sum_{B(x,r)\in \mathcal{F}}\mathbf{1}_{B(x,r)}\leq L.
\end{equation*}
\end{de}
Note that unlike \cite{Al19}, we require subfamily  to be countable. Recall that the validity of BCP is sufficient to imply the validity of the differentiation theorem for every locally finite Borel semi-regular measure. See for instance \cite{Ma95,LR17,LR19}.
$\\ \hspace*{\fill} \\$

\section{The proof of the main result}\label{S3}
We shall always work on a metric measure space $(X,d,\mu)$. We denote by $\langle f\rangle_A$ the integral average of $f$ over a measurable set $A$, namely $\langle f\rangle_A =\frac{1}{\mu(A)}\int_A|f|d\mu$. If $\mu(A)=0$ then we set $\langle f\rangle_A =0$.

To prove the main result, we need to establish the following lemmas.

\begin{lem}\label{Lem:3.1}
	Let $f,f_n\in L^{1}_{\operatorname {loc}}(\mu)$ be non-negative. If $\liminf\limits_{n\to \infty}\int_{B}f_nd\mu\geq\int_{B}fd\mu$ for all balls $B$ with $\mu(B)<\infty $, then $\liminf\limits_{n\to \infty}Mf_n(x)\geq Mf(x)$.
\end{lem}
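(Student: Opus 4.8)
The plan is to argue pointwise: fix $x\in X$ and prove $\liminf_{n\to\infty}Mf_n(x)\geq Mf(x)$ by approximating $Mf(x)$ from below through the supremum that defines it. Since each $f_n\geq 0$ forces $Mf_n\geq 0$, the inequality is trivial when $Mf(x)=0$, so I may assume $Mf(x)>0$ and fix an arbitrary real $\lambda$ with $0<\lambda<Mf(x)$. It then suffices to produce a single ball through which both the hypothesis and the definition of the maximal function can be applied.

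First I would observe that in the supremum defining $Mf(x)$ only balls of finite measure are relevant. Indeed, because $f\in L^{1}_{\operatorname{loc}}(\mu)$ is non-negative, $\int_B f\,d\mu<\infty$ for every ball $B$; hence whenever $\mu(B)=\infty$ the average $\langle f\rangle_B$ equals $0$. Consequently, from $\lambda<Mf(x)$ there must exist a ball $B$ with $x\in B$ and $0<\mu(B)<\infty$ satisfying $\langle f\rangle_B=\frac{1}{\mu(B)}\int_B f\,d\mu>\lambda$.

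Next I would feed this specific ball into the hypothesis. Since $\mu(B)<\infty$, the assumption gives $\liminf_{n}\int_B f_n\,d\mu\geq\int_B f\,d\mu$. Because $\mu(B)$ is a fixed positive finite constant, I can pull $1/\mu(B)$ outside the $\liminf$ to obtain
\begin{equation*}
	\liminf_{n\to\infty}\langle f_n\rangle_B=\frac{1}{\mu(B)}\liminf_{n\to\infty}\int_B f_n\,d\mu\geq\frac{1}{\mu(B)}\int_B f\,d\mu>\lambda.
\end{equation*}
Finally, since $x\in B$ and $\mu(B)>0$, the definition of the maximal operator yields $Mf_n(x)\geq\langle f_n\rangle_B$ for every $n$, whence $\liminf_n Mf_n(x)\geq\liminf_n\langle f_n\rangle_B>\lambda$. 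Letting $\lambda\uparrow Mf(x)$ completes the argument.

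The only genuinely delicate point is the reduction to balls of finite measure: one must combine non-negativity with local integrability to discard balls of infinite measure, for otherwise the hypothesis—which is stated only for balls $B$ with $\mu(B)<\infty$—could not be invoked at the chosen ball, and the passage of $1/\mu(B)$ through the $\liminf$ would break down. Everything else is a routine unwinding of the definitions.
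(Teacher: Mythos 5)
Your proof is correct and takes essentially the same approach as the paper's: fix $x$, choose a ball whose average of $f$ nearly attains $Mf(x)$, apply the hypothesis on that single ball, and pass to the $\liminf$. The only differences are cosmetic --- you merge the paper's two cases ($Mf(x)<\infty$ and $Mf(x)=\infty$) into one by working with an arbitrary $\lambda<Mf(x)$, and you make explicit the reduction to balls of positive \emph{finite} measure (needed both to invoke the hypothesis and to pull $1/\mu(B)$ through the $\liminf$), a point the paper's proof leaves implicit.
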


\begin{proof}
	Fix a point $x\in X$. If $Mf(x)< \infty$, so for every real number $\varepsilon>0$, there exists a ball such that $\mu(B_\varepsilon)>0$ and $\langle  f\rangle_{B_{\varepsilon}}>Mf(x)-\frac{\varepsilon}{2}$. By the assumption that $\liminf\limits_{n\to \infty}\int_{B}f_nd\mu\geq\int_{B}fd\mu$ for all balls $B$, so we have
	$\liminf\limits_{n\to \infty}\langle f_n\rangle_{B_{\varepsilon}}\geq\langle f\rangle_{B_{\varepsilon}}$, then for $\varepsilon>0$,  there exists a natural number $N_{\varepsilon}>0$ such that $\langle f_n\rangle_{B_{\varepsilon}}\geq\langle f\rangle_{B_{\varepsilon}}-\frac{\varepsilon}{2}$ for ${\displaystyle n\geq N_{\varepsilon},}$ hence $\langle f_n\rangle_{B_{\varepsilon}}>Mf(x)-\varepsilon$. Applying the definition of $Mf_n(x)$, for $n> N_{\varepsilon}, $ we get $Mf_n(x)>Mf(x)-\varepsilon$, which prove the lemma in the case of  $Mf(x)< \infty$. 
	
	Now suppose $Mf(x)=\infty$. Thus for every $M>0$, we  can also find a ball $B_M$ such that $\mu(B_M)>0$ and $\langle  f\rangle_{B_M}>M$. The same way shows that there exists a $N_M>0$ such that $\langle f_n\rangle_{B_M}>\frac{M}{2}$ for all $n\geq N_M$. Hence  $\liminf\limits_{n\to \infty}Mf_n(x)=\infty$. This completes the proof.
\end{proof}

\begin{cor}\label{cor:3.2}
	Let $f,f_n\in L^{1}_{\operatorname {loc}}(\mu)$ be non-negative. If $f_n$ is monotonically increasing and  converges to $f$ a.e., then $\lim\limits_{n\to \infty}\Vert f_n\Vert_p=\Vert f\Vert_p$ and $\lim\limits_{n\to \infty}\Vert Mf_n\Vert_p=\Vert Mf\Vert_p$ 
\end{cor}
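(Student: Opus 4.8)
The plan is to reduce everything to the monotone convergence theorem, using Lemma \ref{Lem:3.1} as the bridge for the maximal functions. For the first assertion, since $0 \le f_n \uparrow f$ a.e.\ and $t \mapsto t^p$ is increasing on $[0,\infty)$, we have $f_n^p \uparrow f^p$ a.e., so the monotone convergence theorem gives $\int f_n^p \, d\mu \to \int f^p \, d\mu$, that is, $\Vert f_n\Vert_p \to \Vert f\Vert_p$ (in the extended sense, allowing both sides to be infinite).

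For the second assertion, I would first observe that $M$ is monotone: if $0 \le g \le h$ then $\langle g\rangle_B \le \langle h\rangle_B$ for every ball $B$ with $\mu(B)>0$, whence $Mg \le Mh$. Applying this to $f_n \le f_{n+1} \le f$ shows that $(Mf_n)_n$ is nondecreasing and bounded above by $Mf$; in particular the pointwise limit $g := \lim_n Mf_n$ exists and satisfies $g \le Mf$.

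To obtain the reverse inequality, I would verify the hypothesis of Lemma \ref{Lem:3.1}. Fix a ball $B$ with $\mu(B)<\infty$. Since $0 \le f_n \uparrow f$ on $B$, the monotone convergence theorem gives $\int_B f_n\, d\mu \to \int_B f\, d\mu$, so in particular $\liminf_n \int_B f_n\, d\mu = \int_B f\, d\mu$, which is exactly the required hypothesis. Lemma \ref{Lem:3.1} then yields $\liminf_n Mf_n(x) \ge Mf(x)$ for every $x \in X$, hence $g = \liminf_n Mf_n \ge Mf$. Combining the two inequalities gives $Mf_n \uparrow Mf$ pointwise.

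Finally, since $0 \le Mf_n \uparrow Mf$ pointwise and $t \mapsto t^p$ is increasing, one more application of the monotone convergence theorem to $(Mf_n)^p \uparrow (Mf)^p$ gives $\Vert Mf_n\Vert_p \to \Vert Mf\Vert_p$, completing the proof. The only step requiring a little care is the monotonicity of $M$, which upgrades the one-sided $\liminf$ inequality supplied by Lemma \ref{Lem:3.1} into the genuine monotone pointwise convergence $Mf_n \uparrow Mf$ needed to invoke monotone convergence on the maximal functions; apart from that, the argument is a direct chain of applications of the monotone convergence theorem.
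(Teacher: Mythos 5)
Your proof is correct and follows essentially the same route as the paper: verify the hypothesis of Lemma \ref{Lem:3.1} to get $\liminf_n Mf_n \geq Mf$, use the order-preserving property of $M$ to get the reverse inequality and genuine monotone convergence $Mf_n \uparrow Mf$, then apply the monotone convergence theorem to both $f_n^p$ and $(Mf_n)^p$. If anything, your justification of $\int_B f_n\,d\mu \to \int_B f\,d\mu$ via monotone convergence is cleaner than the paper's appeal to ``strong convergence implies weak convergence,'' but the argument's architecture is identical.
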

\begin{proof}
	Since strong convergence implies weak convergence, $\lim\limits_{n\to\infty}\int_{B}f_nd\mu=\int_{B}fd\mu$ for all balls $B$ with $\mu(B)<\infty$. By Lemma \ref{Lem:3.1},  we have $ Mf \leq \liminf\limits_{n\to \infty}Mf_n$. Further, operator $M$  is order preserving, then $Mf_n$ is  monotonically increasing  and converges to $Mf$. By monotone convergence theorem, the result follows.
\end{proof}
The following approximation theorem is well known (see
e.g. \cite[Theorem 1.1.4]{EG92},  \cite[Theorem 2.2.2]{Fe69} or \cite[Theorem 1.3]{Si83}).

\begin{lem}\label{lem:3.3}
	Let $\mu$ be a Borel semi-regular measure on $(X,d)$, $E$ a $\mu$-measurable set, and $\varepsilon>0$. If $\mu(E)<\infty$, then there is a bounded closed set $C\subset E$ such that $\mu(E\backslash C)\leq \varepsilon$.
\end{lem}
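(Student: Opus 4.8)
The plan is to reduce the statement to the classical fact that a finite Borel measure on a metric space is inner regular by closed sets (and outer regular by open sets), and then to upgrade ``closed'' to ``bounded closed'' using the exhaustion of $X$ by balls. Since $\mu(E)<\infty$, I would first replace $E$ by a Borel set: invoking the defining property of a Borel semi-regular measure, choose a Borel set $B$ with $\mu((E\setminus B)\cup(B\setminus E))=0$, so that $\mu(B)=\mu(E)<\infty$. The problem is thereby reduced, up to null sets, to approximating the finite-measure Borel set $B$ from inside by closed sets.

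The core step is to show that for a finite Borel measure on $(X,d)$ the collection
\[
	\mathcal{R}:=\{A\in \mathcal{B}(X): \forall\,\varepsilon>0,\ \exists\ C \text{ closed},\ U\text{ open},\ C\subseteq A\subseteq U,\ \mu(U\setminus C)<\varepsilon\}
\]
is all of $\mathcal{B}(X)$. I would verify three things: (i) every closed set $F$ lies in $\mathcal{R}$, taking $C=F$ and $U_n=\{x:d(x,F)<1/n\}$, which decrease to $F$ (as $F$ is closed), so $\mu(U_n\setminus F)\to 0$ by continuity from above, where finiteness of $\mu$ is essential; (ii) $\mathcal{R}$ is stable under complementation, by interchanging the open and closed approximants; and (iii) $\mathcal{R}$ is stable under countable unions, the only subtlety being that a countable union of closed sets need not be closed, so I would approximate $\bigcup_n C_n$ by a finite subunion $\bigcup_{n\le N}C_n$ for $N$ large. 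Since $\mathcal{R}$ is then a $\sigma$-algebra containing all closed sets, $\mathcal{R}=\mathcal{B}(X)$.

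Applying this to $B$ yields a closed set $C_0\subseteq B$ with $\mu(B\setminus C_0)$ small. To make it bounded, fix $x_0\in X$ and write $X=\bigcup_n B^{cl}(x_0,n)$; since $\mu(C_0)\le\mu(E)<\infty$, continuity from below gives an $n$ with $\mu\big(C_0\setminus B^{cl}(x_0,n)\big)$ small, so $C_1:=C_0\cap B^{cl}(x_0,n)$ is bounded and closed, with $\mu(E\setminus C_1)\le\mu(E\setminus B)+\mu(B\setminus C_1)$ small. Finally, to return from $B$ to $E$ and secure the genuine inclusion $C\subseteq E$ rather than merely $\mu(C_1\setminus E)=0$, I would cover the null set $C_1\setminus E\subseteq B\setminus E$ by an open set $V$ of arbitrarily small measure and set $C:=C_1\setminus V=C_1\cap V^{c}$; this $C$ is still bounded and closed, is now contained in $E$, and satisfies $\mu(E\setminus C)\le \mu(E\setminus C_1)+\mu(V)$, which is $\le\varepsilon$ after the obvious bookkeeping of the three small quantities.

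The step I expect to be the main obstacle is this last reduction from a $\mu$-measurable $E$ to a closed set sitting genuinely inside $E$: inner regularity is naturally established for \emph{Borel} sets, and one must ensure that the part of $C_1$ protruding outside $E$ can be excised while preserving closedness, i.e.\ that the null set $B\setminus E$ admits an open cover of arbitrarily small measure. This is precisely outer regularity for a $\mu$-measurable null set, and it is where the semi-regularity hypothesis together with the completeness of the measurable $\sigma$-algebra does its work; it is also the reason the authors note that the proofs may be carried out assuming $\mu$ is a Borel measure, the general semi-regular case following by exactly this null-set absorption.
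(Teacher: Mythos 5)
The paper itself gives no proof of Lemma \ref{lem:3.3}: it is quoted as a known approximation theorem from Evans--Gariepy, Federer, or Simon, and in all of those references the hypothesis is that $\mu$ is a \emph{Borel regular} (outer) measure, not merely Borel semi-regular. Your first two stages reproduce the standard argument behind those citations and are essentially fine, up to one repair you should make explicit: the class-$\mathcal{R}$ lemma holds for \emph{finite} Borel measures, while $\mu$ itself need not be finite, so you must run it for the restricted measure $\nu(A):=\mu(A\cap B)$ (finite because $\mu(B)=\mu(E)<\infty$) rather than for $\mu$; correspondingly, in the excision step the quantity you can hope to control is $\mu(C_1\cap V)\le\nu(V)$, not $\mu(V)$, since an open superset of a null set can well have infinite $\mu$-measure.

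The genuine gap is exactly the step you flagged as the main obstacle, and your stated resolution of it is wrong: semi-regularity does \emph{not} provide outer regularity of the $\mu$-measurable null set $B\setminus E$, nor even a Borel null set containing it. The definition only yields a Borel set $B'$ with $\mu\bigl((B\setminus E)\triangle B'\bigr)=0$; this gives a Borel \emph{null} set, but no containment $B\setminus E\subseteq B'$, and iterating the approximation never closes that gap. In fact the step can fail outright. Let $X=[0,1]$ with Lebesgue measure $\lambda$, let $N$ be a Bernstein set (both $N$ and $N^{c}$ meet every uncountable closed set), put $\mathcal{A}=\{A: A\triangle B\subseteq N \text{ for some Borel } B\}$ and $\mu(A):=\lambda(B)$; this is well defined because every Borel subset of $N$ is $\lambda$-null, and $\mu$ is Borel semi-regular with Borel restriction $\lambda$. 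Take $E=[0,1]\setminus N$, so $\mu(E)=1$ and the natural choices give $B=[0,1]$, $C_1=[0,1]$, $C_1\setminus E=N$. Every open $V\supseteq N$ has closed complement disjoint from $N$, hence countable, so $\lambda(V)=1$: no excision of small measure exists. Worse, every closed $C\subseteq E$ is countable and thus $\mu$-null, so the conclusion of Lemma \ref{lem:3.3} itself fails for this $E$ --- the statement is simply false for general semi-regular $\mu$ and non-Borel measurable $E$. Your proof (and the lemma) is correct when $E$ is Borel, which is the only case the paper actually uses (the proof of Lemma \ref{lem:3.4} first replaces $f$ by a Borel function and applies Lemma \ref{lem:3.3} to Borel sets), and it is correct under the Borel regularity hypothesis of the cited references, where one can sandwich $E$ between Borel sets and obtain a Borel subset of $E$ of full measure, so that the protruding null set never arises.
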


Recall that a  finitely simple Borel function  has the form
$\sum\limits_{i=1}^Nc_i\mathbf{1}_{E_i}$,
where $N<\infty, c_i\in \mathbb{R}$, and $E_i$ are pairwise disjoint Borel sets with $\mu(E_i)<\infty$. The support of a measurable function $g$, denoted by $\operatorname {supp} (g)$, is the closure of the set $\{x\in X:g(x)\neq 0\}$.
\begin{lem}\label{lem:3.4}
	Let $C$ be a constant.  The following are equivalent: \begin{enumerate}[\rm(i)]
		\item For all $f\in L^p(\mu)$, $\Vert Mf\Vert_p\geq C \Vert f\Vert_p$.
		\item For all non-negative finitely simple Borel function $g$, we have $\Vert Mg\Vert_p\geq C\Vert g\Vert_p$.
		\item For all non-negative bounded upper semi-continuous functions $g$ whose $\operatorname {supp} (g)$ is bounded  and $ \mu(\operatorname {supp} (g))< \infty$,  we have $\Vert Mg\Vert_p\geq C \Vert g\Vert_p$.
\end{enumerate}
\end{lem}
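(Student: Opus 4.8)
The plan is to establish the cycle $\mathrm{(i)}\Rightarrow\mathrm{(iii)}\Rightarrow\mathrm{(ii)}\Rightarrow\mathrm{(i)}$, with Corollary \ref{cor:3.2} doing all the real work: in each nontrivial step I will approximate the target function \emph{from below} by a monotone increasing sequence drawn from the smaller class, so that Corollary \ref{cor:3.2} transports the inequality to the limit. Two preliminary reductions come first. Since $Mf=M|f|$ and $\Vert f\Vert_p=\Vert |f|\Vert_p$, it suffices to argue for non-negative functions throughout. Moreover --- and this is the promised explanation of why one may work with Borel measures --- because $\mu$ is Borel semi-regular, every $\mu$-measurable function coincides $\mu$-a.e.\ with a Borel function, while both $\Vert f\Vert_p$ and the function $x\mapsto Mf(x)$ depend only on the $\mu$-a.e.\ class of $f$ (the maximal function is assembled from integrals, which are blind to null sets). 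Hence each inequality need only be checked for non-negative \emph{Borel} functions, and every approximating set produced below may be taken Borel. With these reductions, $\mathrm{(i)}\Rightarrow\mathrm{(iii)}$ is immediate, since a bounded function whose support has finite measure already lies in $L^p(\mu)$, so (iii) is merely the restriction of (i) to a subclass.

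For $\mathrm{(iii)}\Rightarrow\mathrm{(ii)}$, let $g=\sum_{i=1}^N c_i\mathbf{1}_{E_i}$ be a non-negative finitely simple Borel function, so $c_i\geq 0$ and $\mu(E_i)<\infty$. For each $i$ I would invoke the inner regularity of Lemma \ref{lem:3.3} to select bounded closed sets $C_i^{(n)}\subset E_i$ with $\mu(E_i\setminus C_i^{(n)})\to 0$; passing to the finite unions $\bigcup_{m\leq n}C_i^{(m)}$, I may assume the $C_i^{(n)}$ are nested and exhaust $E_i$ up to a null set. Put $g_n=\sum_{i=1}^N c_i\mathbf{1}_{C_i^{(n)}}$. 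As a finite sum of non-negative multiples of indicators of closed sets, $g_n$ is bounded and upper semicontinuous; its support is a finite union of bounded closed sets, hence bounded, with $\mu(\operatorname{supp}(g_n))<\infty$. Thus each $g_n$ belongs to the class in (iii), and $g_n\uparrow g$ $\mu$-a.e. Applying (iii) gives $\Vert Mg_n\Vert_p\geq C\Vert g_n\Vert_p$, and Corollary \ref{cor:3.2} passes to the limit to yield $\Vert Mg\Vert_p\geq C\Vert g\Vert_p$.

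For $\mathrm{(ii)}\Rightarrow\mathrm{(i)}$, fix a non-negative $f\in L^p(\mu)$, which lies in $L^1_{\operatorname{loc}}(\mu)$ under our standing assumptions. The usual dyadic construction produces non-negative simple functions $\phi_n\uparrow f$ a.e.; discarding the level on which the coefficient vanishes, every remaining level set is of the form $\{f\geq t\}$ with $t>0$, and therefore has finite measure by Chebyshev's inequality, $\mu(\{f\geq t\})\leq t^{-p}\Vert f\Vert_p^p$. Hence each $\phi_n$ is a bona fide finitely simple Borel function, (ii) applies to it, and a final appeal to Corollary \ref{cor:3.2} delivers $\Vert Mf\Vert_p\geq C\Vert f\Vert_p$.

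The bulk of the work is the routine verification that these sequences actually lie in the prescribed classes and increase to the right a.e.\ limit. The one genuinely delicate step is the construction in $\mathrm{(iii)}\Rightarrow\mathrm{(ii)}$, where the sets $C_i^{(n)}$ must be \emph{simultaneously} closed and bounded (to keep $g_n$ upper semicontinuous with bounded support), nested (to license monotone convergence), and exhaustive up to measure zero (to recover the correct a.e.\ limit). This is precisely the point at which Lemma \ref{lem:3.3} and the Borel semi-regularity reduction are indispensable; everything else is bookkeeping around Corollary \ref{cor:3.2}.
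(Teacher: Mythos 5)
Your proposal is correct and follows essentially the same route as the paper: reduce to non-negative Borel functions via semi-regularity, pass limits with Corollary \ref{cor:3.2}, use Lemma \ref{lem:3.3} to replace the sets of a simple function by nested bounded closed sets (giving upper semicontinuous approximants), and use dyadic simple approximation for $\mathrm{(ii)}\Rightarrow\mathrm{(i)}$. The only cosmetic difference is that you justify finiteness of the dyadic level sets by Chebyshev's inequality, where the paper intersects with an exhausting sequence from $\sigma$-finiteness --- both work, and your nesting of the closed sets $C_i^{(n)}$ is in fact slightly more careful than the paper's write-up.
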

	\begin{proof}
		Without loss of generality, the functions that appear in this proof are all non-negative.
		By restricting the measure on its Borel $\sigma$-algebra to get a new $\nu$, for $f\in L^p(\mu)$, there is a Borel function  $g$ such that $\mu$-a.e. $g=f$. Thus \begin{equation*}
		M_\mu f =\sup_{x\in  B;\mu(B)>0}\frac{\int_{B}fd\mu}{\mu(B)}=\sup_{x\in  B;\mu(B)>0}\frac{\int_{B}gd\mu}{\nu(B)}=\sup_{x\in  B;\mu(B)>0}\frac{\int_{B}gd\nu}{\nu(B)}=M_\nu g
	\end{equation*} and $\Vert g\Vert_{p,\nu}= \Vert f\Vert_{p,\mu}$. This means that we only need to focus on  Borel functions.
	
	Now we are in the position that $\textrm{(ii)}$ implies $\textrm{(i)}$. Applying Corollary \ref{cor:3.2},  it suffices to prove that for every Borel function $f\in L^p(\mu)$,  there exists a finitely simple Borel function sequence $\{f_{n}\}_{n=1}$ which is monotonically increasing and  converges to $f$. In fact, since $f\in L^p(\mu)$, $f$ is a.e. finite. Recall that $\mu$ is $\sigma$-finite and let the pairwise disjoint subsets $A_i$ such that $\bigcup_{i=1}^\infty A_i=X$ and $\mu(A_i)<\infty$. For $k\in \mathbb{N}$, set 
	\begin{equation*}
		E_{k,j}=\{x\in \bigcup_{i=0}^kA_i:\frac{j-1}{2^k}\leq f(x)<\frac{j}{2^k}\}\; (j=1,2,...,k\cdot 2^k)
	\end{equation*}
	and 	\begin{equation*}
		f_k(x)=\sum_{j=1}^{k\cdot 2^k}\frac{j-1}{2^k}\mathbf{1}_{E_{k,j}}(x).
	\end{equation*}
	Clearly, $f_k$ is what we need by simple function approximation theorem.
	
	As we have shown above, it suffices to consider the  simple Borel function $f=\sum_{n=1}^Nc_n\mathbf{1}_{B_n}$, where $B_n$ is a  Borel set with $\mu(B_n)<\infty$.  By Lemma \ref{lem:3.3}, for each $\varepsilon>0$ and  $1\leq n\leq N$ there exists a bounded closed set $F_{n,\varepsilon}$ with $F_{n,\varepsilon}\subset B_n$ and $c_n\mu(B_n\backslash F_{n,\varepsilon})<\frac{\varepsilon}{2^{N}}$. Set $\psi_k =\sum_{n=1}^Nc_n\mathbf{1}_{F_{n,\frac{1}{k}}}.$ Then $\psi_k$ is upper continuous and supports on a bounded closed set with $ \mu(\operatorname {supp} (\psi_k))< \infty$, and $\psi_k \uparrow f$ as desired.
\end{proof}

\begin{lem}\label{lem:3.5}
	Let $\mu$ be an infinite Borel semi-regular measure on $(X,d)$. If the sequence $x_{n}\in X$ is bounded and $\lim\limits_{n\to \infty}r_n =\infty$, then $\lim\limits_{n\to \infty}\mu(B(x_n,r_n))= \infty$.
\end{lem}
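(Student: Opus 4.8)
The plan is to reduce the behavior of $\mu(B(x_n,r_n))$ to that of a family of concentric balls about a single fixed center, and then to invoke continuity of the measure from below together with the hypothesis $\mu(X)=\infty$.

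First I would use the boundedness of $\{x_n\}$ to fix a point $x_0\in X$ and a radius $R>0$ with $d(x_n,x_0)\le R$ for every $n$; concretely one may take $x_0=x_1$ and $R=\sup_n d(x_n,x_1)<\infty$, the finiteness being exactly what the boundedness hypothesis supplies. The key geometric observation is then a triangle-inequality containment: for any $n$ with $r_n>R$ and any $y$ satisfying $d(y,x_0)<r_n-R$, one has $d(y,x_n)\le d(y,x_0)+d(x_0,x_n)<r_n$, whence
\[
	B(x_0,r_n-R)\subseteq B(x_n,r_n).
\]
The same containment holds verbatim for closed balls with $\le$ replacing $<$, so the argument is insensitive to the open/closed convention fixed in Section \ref{S2}. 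Since all balls are Borel (hence $\mu$-measurable) and $\mu$ is monotone, this gives $\mu(B(x_n,r_n))\ge \mu(B(x_0,r_n-R))$.

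Next I would establish that $\lim_{s\to\infty}\mu(B(x_0,s))=\infty$. Because the metric takes only finite values, $X=\bigcup_{k=1}^\infty B(x_0,k)$, and the balls $B(x_0,k)$ increase with $k$; by continuity of $\mu$ from below, $\mu(B(x_0,k))\uparrow \mu(X)=\infty$. As the function $s\mapsto \mu(B(x_0,s))$ is nondecreasing, its limit as $s\to\infty$ equals $\sup_s\mu(B(x_0,s))=\mu(X)=\infty$. Combining the two steps: since $r_n\to\infty$ we have $r_n>R$ for all large $n$ and $r_n-R\to\infty$, so
\[
	\mu(B(x_n,r_n))\ge \mu(B(x_0,r_n-R))\longrightarrow \infty,
\]
which is the claim.

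There is no substantial obstacle here beyond bookkeeping. The only two points demanding a little care are that the containment be recorded with the correct strict/non-strict inequalities so that it survives both ball conventions, and that the constant $R$ be genuinely finite, which is guaranteed precisely by the assumption that $\{x_n\}$ is bounded. Everything else is monotonicity and the standard continuity of a measure from below.
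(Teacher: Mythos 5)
Your proof is correct and takes essentially the same route as the paper's: both arguments trap a concentric ball about a fixed center inside $B(x_n,r_n)$ via the triangle inequality (the paper uses radius $r_n/2$, you use $r_n-R$) and then conclude from the continuity of $\mu$ from below together with $\mu(X)=\infty$. If anything, your write-up is slightly more careful than the paper's, which mislabels the monotone-limit step as ``continuity from above,'' whereas you correctly invoke continuity from below and also check insensitivity to the open/closed ball convention.
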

\begin{proof}Suppose that $x_{n}\in B(x,R)$ for some $x\in X$. Applying the continuity from above of measure, we get $\lim\limits_{n\to \infty}\mu(B(x,\frac{r_n}{2})) =\mu(X)=\infty.$ Since $\lim\limits_{n\to \infty}r_n =\infty$, there is a $N$ such that $B(x,\frac{r_n}{2})\subset B(x_n, r_n)$ for every $n\geq N$. Hence $\lim\limits_{n\to \infty}\mu(B(x_n,r_n))=\infty$ which proves lemma.
 \end{proof}

Now we follow methods from \cite[Theorem 1.1]{IJN17} to prove Theorem \ref{thm:1.1}.

\begin{proof}[Proof of Theorem \ref{thm:1.1}.]
		 By Lemma \ref{lem:3.4}, it remains to prove that \eqref{ineq:main} holds for all non-negative bounded  functions $f$ whose $\operatorname {supp} (f)$ is bounded and $ \mu(\operatorname {supp} (f))< \infty$. Suppose first that $f$ is bounded by $C$. Then 
		 $\int_{X}fd\mu \leq C\mu(\operatorname {supp} (f))< \infty$, and hence $f\in L^1(\mu)$. 
		 
		 As $(X,d)$ satisfies the Besicovitch covering property, the differentiation theorem holds, hence $\lim\limits_{r\to 0}\langle f\rangle_{B(x,r)} = f(x)$ almost everywhere $x\in X$ for every $f\in  L^{1}_{\operatorname {loc}}(\mu)$. If we set $F=\{x\in X: \lim\limits_{r\to 0}\langle f\rangle_{B(x,r)} = f(x)\}$, then $\mu(X\backslash F)=0$. Let \begin{equation*}
		 	E=\{x\in \operatorname {supp} (\mu): r\mapsto \mu(B(x,r))\; \text{is continuous}\}.
		 \end{equation*} Clearly, $\mu(X\backslash E)=0$ by assumption. Fix $t>0$ and consider $K_{t}=\{x\in  E\bigcap F: f(x)> t\}$. For fixed $x\in K_t$, applying the definition of $ \operatorname {supp} (\mu)$ and $F$, now choose a ball $B$ centered at $x$ such that  $\langle f\rangle_{B}>t$ and $\mu(B)>0$.	From the assumption that $\mu(X)=\infty$, we obtain $\lim\limits_{n\to \infty}\langle f\rangle_{B(x,r_{n})}=0$ as $\lim\limits_{n\to \infty}r_n =\infty$. Fix $x\in K_t$ and set $p(s)=\langle f\rangle_{B(x,s)}$. Then $p(s)$ is a continuous function on $(0,\infty]$ that has the intermediate value property and so $\{r: \langle f\rangle_{B(x,r)}=t\}$ is non-empty for every $x\in K_t$. Let 
	 	\begin{equation*}
		 R_t=\sup\limits_{x,r}\{r:x\in K_t, \langle f\rangle_{B(x,r)}=t\}.
	 	\end{equation*}
  		We show that $R_t<\infty$. To do it, we argue by contradiction. Suppose, if possible, that $R_t = \infty$. Then there exists a sequence $(x_n, r_n)\in K_t \times (0,\infty]$ such that  $\lim\limits_{n\to \infty} r_n = \infty$ and $\langle f\rangle_{B(x_n,r_n)}=t$. As $\operatorname {supp} (f)$ is bounded, Lemma \ref{lem:3.5} gives $\limsup\limits_{n\to \infty}\langle f\rangle_{B(x_{n},r_{n})}\leq \limsup\limits_{n\to \infty}\frac{\Vert f\Vert_1}{\mu(B(x_{n},r_{n}))}$ = 0. Thus $\lim\limits_{n\to \infty}\langle f\rangle_{B(x_{n},r_{n})}=0$ which obviously contradicts $t>0$. Note that this number depends only on $t$ if $f$ is given. Thus for $x\in K_t$, there exists an $r_x\leq R_t$ such that $\langle  f\rangle_{B(x,r_x)}=t$. Now for cover $\mathcal{C}$ of $K_t$ given by 
		\begin{equation*}
		\mathcal{C}=\{B(x, r) :x\in K_t,\;\langle f\rangle_{B(x,r)}=t\},
	\end{equation*}
applying Besicovitch covering property, we extract a countable subfamily $B(x_{t,j}, r_{t,j})\in \mathcal{C}$ so that 
	\begin{equation*}
		\mathbf{1}_{K_t} \leq \psi(x,t):=\sum_{j}\mathbf{1}_{B(x_{t,j}, r_{t,j})}\leq L.
	\end{equation*}

We show that $\psi(x,t)$ satisfies the following properties also: \begin{enumerate}[\quad \quad \quad \rm(1)]
	\item if $t>Mf(x)$ then $\psi(x,t)=0$;
	\item if $f(x)>t$, then $\psi(x,t)\geq 1$ almost everywhere;
	\item for every $t>0$, we have $\int_{X}t\psi(x,t)d\mu(x)=\int_{X}\psi(x,t)f(x)d\mu(x)$.
\end{enumerate}

 For the first property, we prove it by contradiction. Suppose, if possible, that $\psi(x,t)> 0$, then there exists a $B(x_{t,\ell},r_{t,\ell})$ containing $x$. Thus, $Mf\geq\langle f\rangle_{B(x_{t,\ell},r_{t,\ell})}=t$, contradicting the assumption that $t> Mf(x)$.
 
 To obtain the second property, let $x\in K_t$. The selection of $\psi$ gives $\psi(x,t)\geq \mathbf{1}_{K_t}(x)\geq 1$, and the property follows by $\mu(X\backslash(E\bigcap F))=0$.
 
 The third property follows immediately.
 
 We now seek to  prove the desired inequality. Since $\mu$ is $\sigma$-finite, applying Fubini-Tonelli's theorem, (3) implies the following equality: 
 \begin{equation*}
 	\int_{X}\int^{\infty}_0t^{p-1}\psi(x,t)dtd\mu(x)= 	\int_{X}\int^{\infty}_0t^{p-2}\psi(x,t)f(x)dtd\mu(x).
 \end{equation*}
By property (1), we can restrict the integration to $[0,Mf(x)]$, that is
 \begin{equation}\label{3.1}
	\int_{X}\int^{Mf(x)}_0t^{p-1}\psi(x,t)dtd\mu(x)= 	\int_{X}\int^{Mf(x)}_0t^{p-2}\psi(x,t)f(x)dtd\mu(x).
\end{equation}
Now, since $\psi\leq L$, and  $Mf\geq f$ a.e., it follows from the above equality \eqref{3.1} that
\begin{align*}
	\frac{L}{p}\left(\Vert Mf\Vert_p^p-\Vert f\Vert_p^p\right)&\geq \int_{X}\int^{Mf(x)}_0t^{p-1}\psi(x,t)dtd\mu(x)-\int_{X}\int^{f(x)}_0t^{p-1}\psi(x,t)dtd\mu(x)\\
	&\geq \int_{X}\int^{f(x)}_0t^{p-1}\psi(x,t)\left(\frac{f(x)}{t}-1\right)dtd\mu(x).
\end{align*}
Note that property (2) yields 
\begin{align*}
	\frac{L}{p}\left(\Vert Mf\Vert_p^p-\Vert f\Vert_p^p\right)&\geq\int_{X}\int^{f(x)}_0t^{p-1}\left(\frac{f(x)}{t}-1\right)dtd\mu(x)= \frac{\Vert f\Vert_p^p}{p(p-1)}.
\end{align*}
This finishes the proof.
\end{proof}
\begin{re} 
	More generally, the same is true for quasi-metric in place of metric if we assume that all balls are Borel semi-regular measurable. It may happen that a ball in quasi-metric space is not a Borel set. To avoid such pathological cases, the assumption  must be made to ensure the definition of $M_\mu$ is reasonable. In fact, Mac\'{ı}as and Segovia showed in  \cite[Theorem 2, p. 259]{MS79} that  there exists an $\alpha\in (0,1)$ and a quasi-metric $d_*$ which is equivalent to original quasi-metric and original topology is metrizable by $(d_*)^\alpha$. Thus we can generalize  Lemma \ref{lem:3.3} to quasi-metric space since the bounded closed sets of both are the same. Lemma \ref{lem:3.5} can also be established by quasi-triangle inequality. From the foregoing discussion, the same proof carries over into quasi-metric space.
\end{re}

%\begin{re}
%	Set $c_p=\left( 1+\frac{1}{(p-1)L}\right)^{1\slash p}$ and $\phi (t) =\frac{t^p}{(1+t)^p}$ for $t\in [0,\infty)$. Clearly, $\phi$ is  differentiable, increasing and $\phi(0)=0$. After multiplying  $\int_{X}t\psi(x,t)d\mu(x)=\int_X\psi(x,t)f(x)d\mu(x)$ both sides by $\phi^{'}(t)$,
%  a similar argument shows that 
%	\begin{equation*}
%		\int_{X}\frac{M^pf(x)}{(Mf(x)+t)^p}d\mu \geq c^p_p\int_{X}\frac{|f(x)|^p}{(|f(x)|+t)^p}d\mu +\left(\frac{c^p_p-1}{t}\right)\int_{X}\frac{|f(x)|^{p+1}}{(|f(x)|+t)^p}d\mu\;\;\text{}\;\; 
%	\end{equation*} for all $f\in L^p\cap L^{\infty}(\mu)$.
%\end{re}
%The important point in the preceding argument is that 

A set of the form $L^+_t(f):=\{x\in \operatorname {supp} (\mu): f(x)>t\}$ is called a \emph{strict superlevel sets} of $f$. It is rather straightforward to see that the following theorem remains valid by modifying the above proof. 

\begin{thm}\label{thm:3.6}
		Let $f\in L^{p}(\mu)$ be non-negative.  If $\mu(X)=\infty$, and for any $t>0$, there exists a finite or countable ball-coverings $\mathcal{F}_t$ of $L^+_t(f)$ such that the average value of the function $f$ on each ball $B\in \mathcal{F}_t$ is equal to $t$, and almost everywhere every point in $\operatorname {supp} (\mu)$ belongs to at most $L$
	balls in $\mathcal{F}_t$, then 
	\begin{equation*}
		\Vert M 
		f \Vert_p \geq \left(1+\frac{1}{(p-1)L}\right)^{\frac{1}{p}}\Vert f\Vert_p.
	\end{equation*}
	Especially, if the balls are  pairwise disjoint, then 	$
	\Vert Mf \Vert_p \geq \left(\frac{p}{p-1}\right)^{\frac{1}{p}}\Vert f\Vert_p.
	$
\end{thm}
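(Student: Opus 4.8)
The plan is to mirror the final computation in the proof of Theorem \ref{thm:1.1}, but to feed in the hypothesized coverings $\mathcal{F}_t$ directly, thereby bypassing the construction that used the Besicovitch covering property and the continuity condition \eqref{con:1.1}. First I would dispose of the trivial cases: if $\Vert Mf\Vert_p=\infty$ the asserted inequality is automatic, and since $f\in L^p(\mu)$ we have $\Vert f\Vert_p<\infty$; so I may assume $\Vert Mf\Vert_p<\infty$, which guarantees that all the double integrals below are finite and that Fubini--Tonelli applies.

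Write $\mathcal{F}_t=\{B_{t,j}\}_j$ and set $\psi(x,t):=\sum_j\mathbf{1}_{B_{t,j}}(x)$. The covering hypothesis gives $\mathbf{1}_{L^+_t(f)}\leq\psi(\cdot,t)\leq L$ almost everywhere, and the requirement $\langle f\rangle_{B_{t,j}}=t$ encodes the key local balance. I would then record the three properties used in Theorem \ref{thm:1.1}: (1) if $t>Mf(x)$ then $\psi(x,t)=0$, because any ball of $\mathcal{F}_t$ containing $x$ has average $t\leq Mf(x)$; (2) if $f(x)>t$ then $\psi(x,t)\geq 1$ a.e., since $\mathcal{F}_t$ covers $L^+_t(f)$ and $\mu$ is of full support; and (3) $\int_X t\,\psi(x,t)\,d\mu(x)=\int_X\psi(x,t)f(x)\,d\mu(x)$ for every $t$, which follows by Tonelli from $\int_{B_{t,j}}f\,d\mu=t\,\mu(B_{t,j})$ summed over $j$. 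I note that property (2) also yields $Mf\geq f$ a.e.\ directly from the coverings---for a.e.\ $x$ and every rational $t<f(x)$ the point $x$ lies in a ball of average $t$, so $Mf(x)\geq t$---so no differentiation theorem is needed here.

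With these in hand I would multiply (3) by $t^{p-2}$, integrate in $t$ over $(0,\infty)$, and apply Fubini--Tonelli to obtain
\begin{equation*}
\int_X\int_0^{Mf(x)}t^{p-1}\psi(x,t)\,dt\,d\mu(x)=\int_X\int_0^{Mf(x)}t^{p-2}\psi(x,t)f(x)\,dt\,d\mu(x),
\end{equation*}
where property (1) has truncated the inner integrals at $Mf(x)$. Using $\psi\leq L$ on the interval $[f(x),Mf(x)]$ together with this identity and $Mf\geq f$ a.e.\ reproduces the estimate
\begin{equation*}
\frac{L}{p}\bigl(\Vert Mf\Vert_p^p-\Vert f\Vert_p^p\bigr)\geq\int_X\int_0^{f(x)}t^{p-1}\psi(x,t)\Bigl(\tfrac{f(x)}{t}-1\Bigr)\,dt\,d\mu(x),
\end{equation*}
and then property (2) replaces $\psi$ by $1$ on $[0,f(x))$ (where $\tfrac{f(x)}{t}-1\geq 0$), giving the clean value $\tfrac{1}{p(p-1)}\Vert f\Vert_p^p$ for the right-hand side. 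Rearranging yields $\Vert Mf\Vert_p^p\geq\bigl(1+\tfrac{1}{(p-1)L}\bigr)\Vert f\Vert_p^p$, which is the claim. For the final assertion, pairwise disjointness of the balls forces $\psi(\cdot,t)\leq 1$, so the whole argument runs with $L=1$, and $\bigl(1+\tfrac{1}{p-1}\bigr)^{1/p}=\bigl(\tfrac{p}{p-1}\bigr)^{1/p}$.

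The main obstacle I expect is bookkeeping around finiteness and joint measurability rather than any new idea: I must ensure the double integrals are finite (handled by the reduction to $\Vert Mf\Vert_p<\infty$) and that $\psi(x,t)$ is jointly measurable in $(x,t)$ so that Fubini--Tonelli is legitimate, exactly as in Theorem \ref{thm:1.1}. The genuinely new point compared with Theorem \ref{thm:1.1} is that, because the coverings are now given, neither the Besicovitch covering property, the continuity condition \eqref{con:1.1}, nor the reduction to bounded functions of bounded support is required; $\mu(X)=\infty$ enters only through its role in making the hypothesized coverings available.
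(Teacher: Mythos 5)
Your proposal is correct and coincides with the paper's intended argument: the paper offers no separate proof of Theorem \ref{thm:3.6}, stating only that it follows ``by modifying the above proof'' of Theorem \ref{thm:1.1}, and your writeup is precisely that modification --- feeding the hypothesized coverings $\mathcal{F}_t$ into the function $\psi(x,t)$, verifying properties (1)--(3), and rerunning the same Fubini--Tonelli computation. Your two extra touches --- disposing of the trivial case $\Vert Mf\Vert_p=\infty$ so the subtractions are legitimate, and obtaining $Mf\geq f$ a.e.\ directly from the coverings via rational $t<f(x)$ rather than from the differentiation theorem (which is unavailable without BCP) --- are exactly the details the paper leaves implicit, and both are handled correctly.
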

Whether the theorem is true without Besicovitch covering property seems worthwhile to pursue. However, due to the bottleneck in the extraction of the ball subfamily, the approach of Theorem \ref{thm:1.1} can not help. To the surprise, if we preserve the continuity assumption of $\mu(B(x_0,r))$ and impose on $f$ the additional conditions of being  radially decreasing symmetric $($with the point $x_0$$)$, we can avoid such difficulties.

\begin{thm}
	Let $f\in L^{p}(\mu)$ be a radial decreasing function with the point $x_0\in X$.  If $\mu(X)=\infty$ and the function $r\mapsto \mu(B(x_0,r))$ is continuous on the interval $(0,+\infty)$, then 
	\begin{equation*}
		\Vert Mf \Vert_p \geq \left(\frac{p}{p-1}\right)^{\frac{1}{p}}\Vert f\Vert_p.
	\end{equation*}
\end{thm}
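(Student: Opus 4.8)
The plan is to deduce the estimate from Theorem \ref{thm:3.6} by exploiting the fact that, for a radial decreasing $f$, every strict superlevel set can be covered by a \emph{single} concentric ball on which the average of $f$ is exactly $t$. Write $f(x)=g(d(x,x_0))$ with $g\colon[0,\infty)\to[0,\infty)$ non-increasing, and for $t>0$ put $\rho_t=\sup\{r:g(r)>t\}$, so that $L^+_t(f)\subseteq B(x_0,\rho_t)$; note $\rho_t<\infty$, since otherwise $f>t$ on all of $X$ and $\|f\|_p^p\geq t^p\mu(X)=\infty$. Introduce the average $p(r):=\langle f\rangle_{B(x_0,r)}$. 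The point is that if for every $t$ one can exhibit a radius $R_t\geq\rho_t$ with $p(R_t)=t$, then $\mathcal F_t=\{B(x_0,R_t)\}$ is a one-ball covering of $L^+_t(f)$ whose member has average $t$; a single ball trivially satisfies the overlap bound with $L=1$ and is vacuously pairwise disjoint, so the pairwise-disjoint case of Theorem \ref{thm:3.6} yields $\Vert Mf\Vert_p\geq(\tfrac{p}{p-1})^{1/p}\Vert f\Vert_p$.

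First I would verify that $p$ is continuous on $(0,\infty)$. The denominator $\mu(B(x_0,r))$ is continuous by hypothesis, which forces every sphere $\{d(\cdot,x_0)=r\}$ to be $\mu$-null and, in particular, keeps $\mu(B(x_0,r))$ finite for finite $r$. Consequently open and closed concentric balls carry the same $f$-integral, and monotone and dominated convergence then show that $r\mapsto\int_{B(x_0,r)}f\,d\mu$ is continuous; hence so is $p$.

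Next I would establish the two one-sided bounds that drive an intermediate-value argument. On the open ball $B(x_0,\rho_t)$ every point satisfies $d(x,x_0)<\rho_t$, hence $g(d(x,x_0))>t$, so $f>t$ there and $p(\rho_t)\geq t$. At the other end, H\"older's inequality gives
\[
p(r)=\frac{1}{\mu(B(x_0,r))}\int_{B(x_0,r)}f\,d\mu\leq\frac{\Vert f\Vert_p}{\mu(B(x_0,r))^{1/p}},
\]
and since $\mu(X)=\infty$ forces $\mu(B(x_0,r))\to\infty$ as $r\to\infty$ (continuity from below, cf.\ Lemma \ref{lem:3.5}), we obtain $p(r)\to0$. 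Applying the intermediate value theorem to the continuous $p$ on $[\rho_t,\infty)$, where $p(\rho_t)\geq t$ and $\lim_{r\to\infty}p(r)=0<t$, produces a radius $R_t\geq\rho_t$ with $p(R_t)=t$. Then $L^+_t(f)\subseteq B(x_0,\rho_t)\subseteq B(x_0,R_t)$, which completes the construction and hence the proof.

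I expect the only genuinely delicate point to be the continuity of the $f$-integral across a fixed radius, that is, ruling out $\mu$-mass concentrated on spheres; this is precisely where the continuity hypothesis on $r\mapsto\mu(B(x_0,r))$ enters. The conceptual gain over Theorem \ref{thm:1.1} is that radial monotonicity collapses the covering to a single concentric ball, so the Besicovitch extraction---and with it the overlap loss responsible for the weaker constant $(1+\tfrac{1}{(p-1)L})^{1/p}$---is entirely bypassed, which is exactly why the sharp one-dimensional constant $(\tfrac{p}{p-1})^{1/p}$ resurfaces.
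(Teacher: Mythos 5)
Your proposal is correct and takes essentially the same route as the paper: the paper's (much terser) proof likewise observes that the strict superlevel sets of a radial decreasing function are balls centered at $x_0$, enlarges to a concentric ball $B$ with $\langle f\rangle_B=t$ (possible by the continuity of $r\mapsto\mu(B(x_0,r))$ and $\mu(X)=\infty$), and invokes the pairwise-disjoint case of Theorem \ref{thm:3.6}. Your write-up simply makes explicit the details the paper leaves implicit (continuity of $r\mapsto\int_{B(x_0,r)}f\,d\mu$ via $\mu$-null spheres, the bounds $p(\rho_t)\geq t$ and $p(r)\to 0$, and the intermediate value argument), all of which check out.
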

\begin{proof}
	Note first that the strict superlevel sets of a radial decreasing function are always balls centered on the point $x_0$. Then we can choose a larger ball $B$ centered at $x_0$ to contain $L^+_t(f)$ while making $\langle f\rangle_{B}=t$ possible for any $t$ by the assumption. Thus the theorem follows immediately applying Theorem \ref{thm:3.6}.
\end{proof}
Given a norm $\Vert\cdot\Vert$ on $\mathbb{R}^d$,  \emph{the strict Hadwiger number} $H^*(\mathbb{R}^d,\Vert\cdot\Vert)$ is the maximum number of translates of the closed unit
ball $B^{cl}(0,1)$ that can touch $B^{cl}(0,1)$ and such that any two  translates
are disjoint.  See \cite[p. 10]{Al21} in more detail.
\begin{proof}[Proof of Corollary \ref{cor:1.2}.]
As special cases of Theorem \ref{thm:1.1} when $(X,d)=(\mathbb{R}^d,\Vert\cdot\Vert)$,  we only need to prove that the Besicovitch constant of $(\mathbb{R}^d,\Vert\cdot\Vert)$ equals its  strict Hadwiger number. But this fact has been shown in \cite[Theorem 3.2]{Al21}. 
\end{proof}

\section{Radon measure in Euclidean spaces}\label{S4}

Since every finite measure has no such lower bounds greater than 1, a natural attempt to generalize Corollary \ref{cor:1.2} is to consider measures only satisfying $\mu(X)=\infty$. The following example tells us that condition \eqref{con:1.1}  can not be omitted.

\begin{ex}\label{ex:4.1}
	Let $\Vert\cdot\Vert$ be any norm on $\mathbb{R}^d$.
	For any $p>1$ and $\varepsilon>0$,
	 there exists a discrete measure $\mu$ on $(\mathbb{R}^d,\Vert\cdot\Vert)$  which satisfies the following conditions{\rm :}
	\begin{enumerate}[\rm(i)]
		\item  $\mu(\mathbb{R}^d)=\infty;$
		\item  $	\mu(\{x\in \operatorname {supp} (\mu): r\mapsto \mu(B(x,r))\; \text{is discontinuous}\})=\infty;$
		\item  $\inf\limits_{\Vert g \Vert_p=1}\Vert Mg\Vert_p\leq 1+\varepsilon.$
	\end{enumerate}
\end{ex}

\begin{proof}
	Given $x\in \mathbb{R}$,  we use $e_x$ to denote the point $(x,0,\cdots,0)$ on $\mathbb{R}^d$. For $t>1$ and $i\in \mathbb{N}$, consider $\mu= \frac{1}{t-1}\delta_{e_0}+\sum\limits_{i=1}^\infty t^{i-1}\delta_{e_i},$ where $\delta_{y}$ is the Dirac measure concentrated at the point $y\in \mathbb{R}^d$. Since (i) and (ii) follow immediately, we only need to verify (iii). Assume  $i\in \mathbb{N}^+.$ Then,  using the convexity of the ball, we have \begin{equation*}
		M_\mu\mathbf{1}_{\{e_0\}}(e_i)=\frac{\Vert\mathbf{1}_{\{e_0\}}\Vert_{1,\mu}}{\inf_{B\ni e_i;B\ni e_0}\mu(B)}=\frac{\frac{1}{t-1}}{\frac{1}{t-1}+\sum_{j=1}^{i}t^{j-1}}=t^{-i}.
	\end{equation*}
Hence, a simple calculation gives 
\begin{align*}
	\Vert M_\mu\mathbf{1}_{\{e_0\}}\Vert_{p,\mu}^p&=\frac{1}{t-1}(M_\mu\mathbf{1}_{\{e_0\}}(e_0))^p+\sum_{i=1}^\infty t^{i-1}(M_\mu\mathbf{1}_{\{e_0\}}(e_i))^p\\
	&=\frac{1}{t-1}+\sum_{i=1}^\infty t^{(1-p)i-1} =\frac{1}{t-1}+\frac{1}{t^p-t}.\\
\end{align*}As $t\to \infty,$ we get $\frac{\Vert M_\mu\mathbf{1}_{\{e_0\}}\Vert_{p,\mu}^p}{\Vert\mathbf{1}_{\{e_0\}}\Vert_{p,\mu}^p}=1+\frac{t-1}{t^p-t}\to 1.$ Thus fixing $p>1$ and $\varepsilon>0$, $\inf\limits_{\Vert g \Vert_p=1}\Vert Mg\Vert_p\leq \frac{\Vert M_\mu\mathbf{1}_{\{e_0\}}\Vert_{p,\mu}}{\Vert\mathbf{1}_{\{e_0\}}\Vert_{p,\mu}}\leq 1+\varepsilon$, when $t$ is large enough.
\end{proof}

Set $c_p=\left(1+\frac{1}{(p-1)H^*(\mathbb{R}^d,\Vert\cdot\Vert)}\right)^{1\slash p},$ where $\Vert\cdot\Vert$ is a norm on $\mathbb{R}^d$. As we mentioned before, \eqref{con:1.1} is satisfied for every absolutely continuous measure with respect to the Lebesgue measure,
it is reasonable to further ask whether $M_\mu f$ for every continuous measure must have  bound $c_p$. Surprisingly, for lower-dimensional Euclidean space, the answer  is affirmative.

We need the following basic fact:

\begin{lem}\label{lem:4.1}
	For $d=1,2,$ let $\mu$ be a Radon measure on $(\mathbb{R}^d,\Vert\cdot\Vert_2)$ that has no atoms, then $\{x\in  \operatorname {supp} (\mu): r\mapsto \mu(B(x,r))\; \text{is discontinuous} \}$ is countable.  In particular, $\mu(\{x\in \operatorname {supp} (\mu): r\mapsto \mu(B(x,r))\; \text{is discontinuous}\})=0$.
\end{lem}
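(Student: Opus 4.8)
The plan is to convert the analytic statement about discontinuities into a purely geometric statement about \emph{spheres of positive measure}, and then to exploit the fact that in dimension $d\le 2$ two distinct spheres meet in a negligible set. First I would fix $x$ and note that $r\mapsto\mu(B^{cl}(x,r))$ is non-decreasing, so its only possible discontinuities are jumps. Using continuity from above and from below of the (locally finite) measure $\mu$, the function is right-continuous, its left limit at $r_0$ is $\mu(B^{o}(x,r_0))$, and the jump is exactly
\[
\mu(B^{cl}(x,r_0))-\mu(B^{o}(x,r_0))=\mu(S(x,r_0)),\qquad S(x,r_0):=\{y:\|y-x\|_2=r_0\}.
\]
Consequently the set of discontinuity points is precisely $D:=\{x\in\operatorname{supp}(\mu):\mu(S(x,r))>0\ \text{for some } r>0\}$, and everything reduces to showing $D$ is countable.

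For $d=1$ this is immediate: a ``sphere'' $S(x,r)=\{x-r,x+r\}$ consists of two points, which are $\mu$-null since $\mu$ has no atoms, so $D=\varnothing$. The substantive case is $d=2$. Here the key geometric input is that two \emph{distinct} circles intersect in at most two points; combined with the no-atom hypothesis, this makes the family $\mathcal{C}$ of all positive-measure circles \emph{pairwise almost disjoint}, in the sense that $\mu(C\cap C')=0$ whenever $C\neq C'$ in $\mathcal{C}$. I would then show $\mathcal{C}$ is countable by a standard $\sigma$-finiteness argument: cover $\mathbb{R}^2$ by the countably many finite-measure balls $A_k=B(0,k)$ (finite because $\mu$ is Radon, hence locally finite), and observe that for any finite subfamily of $\mathcal{C}$ the almost-disjointness gives $\sum_i\mu(C_i\cap A_k)=\mu\big(\bigcup_i(C_i\cap A_k)\big)\le\mu(A_k)<\infty$. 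Hence for each $k$ and each $n$ only finitely many circles satisfy $\mu(C\cap A_k)>1/n$; since every $C\in\mathcal{C}$ has $\mu(C)>0$ and therefore meets some $A_k$ in positive measure, $\mathcal{C}$ is a countable union of finite sets.

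Finally, choosing for each $x\in D$ one radius $r_x$ with $\mu(S(x,r_x))>0$ defines a map $x\mapsto S(x,r_x)$ into $\mathcal{C}$, and this map is injective because a circle determines its center; hence $D$ is countable. The ``in particular'' clause then follows at once, since a countable set has $\mu$-measure $\sum_{x\in D}\mu(\{x\})=0$ when $\mu$ has no atoms. I expect the only genuine obstacle to be the geometric almost-disjointness step: it relies entirely on the bound ``two distinct circles meet in at most two points,'' which is exactly what breaks down for $d\ge 3$, where two distinct spheres can intersect in a full circle that may carry positive measure. This is precisely why the lemma is restricted to $d=1,2$, and I would flag that dimensional dependence explicitly.
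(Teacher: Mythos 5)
Your proposal is correct and takes essentially the same route as the paper: both reduce discontinuities to spheres of positive measure via one-sided limits, dispose of $d=1$ by non-atomicity, and in $d=2$ exploit that two distinct circles meet in at most two points (hence are $\mu$-almost disjoint) together with local finiteness of the Radon measure to conclude the family of positive-measure circles is countable, then pass to centers. The only difference is cosmetic bookkeeping in the countability step: the paper argues by contradiction, pigeonholing an uncountable family into a triple-indexed class $E_{i,j,s}$ of circles with comparable measure, radius, and center location inside a fixed compact ball, whereas you use the standard direct layering $\mu(C\cap A_k)>1/n$ — the same $\sigma$-finiteness idea, arranged slightly more cleanly.
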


	\begin{proof}
		We begin by observing that $\lim\limits_{r\to r_0^+}\mu(B^{o}(x,r))=\mu(B^{cl}(x,r_0))$ and $\lim\limits_{r\to r_0^-}\mu(B^o(x,r))=\mu(B^o(x,r_0))$. Thus the continuity of this function at $r_0$ is equivalent to measure zero on the spherical boundary $\{y:d(y,x)=r_0\}$. If in the case of one dimension, the result follows immediately.
	 Next we turn to the case of  two dimension.  To prove it, we claim that 
		 \begin{equation*}
			E=\{S:\text{ where $S$ is an one-dimensional sphere } \text{s.t.} \;\mu(S)>0\} 
		\end{equation*} is countable.  
		We shall do this by contradiction. Suppose, if possible, that $E$ is uncountable. Letting $E= \left(S_{\gamma}\right)_{\gamma \in \Gamma}$ and $x_{\gamma}$ be a center of $S_\gamma$, we obtain
		$
			E=\bigcup_{i\in \mathbb{Z}}\bigcup_{j\in \mathbb{N}}\bigcup_{s\in \mathbb{N}}E_{i,j,s},
		$ where
	\begin{align*}
	E_{i,j,s}
		\triangleq \{S_{\gamma}:2^{-i-1}\leq\mu(S_\gamma)<2^{-i};\; j\leq\operatorname {radii} S_\gamma<j+1;\;s\leq d(x_{\gamma},0)<s+1 \}.
	\end{align*}
	 Then  there exists at least uncountable spheres meeting requirements listed above for some $i_0,j_0,s_0$, that is, $E_{i_0,j_0,s_0} $ is uncountable. 	The next  observation is that the intersection of  two different spheres is at most two points. By continuity of $\mu$, any two different spheres are  disjoint in the sense of measure. Hence we can choose a subfamily $\{S_{\gamma_k}\}_{k\in \mathbb{N}}\subset E_{i_0,j_0,s_0} $ so that $\mu(B^{cl}(0,j_0+s_0+2))\geq \mu(\bigcup_{k}S_{\gamma_k})=\sum_{k\in\mathbb{N}}\mu(S_{\gamma_k})\geq \infty,$  which contradicts the assumption that $\mu$ is a Radon measure. Thus $E$ is countable. Since every sphere has only one center,  
$
	 	\{x\in \mathbb{R}^2: \text{$\exists r$  s.t. $\mu(\{y:d(y,x)=r\})>0$}\}
$
	 is also countable as desired.
	\end{proof}

\begin{re}
	Notice that the lemma cannot be extended to the high dimension. An example of a singular continuous measure shows the value of the right-hand item of \eqref{con:1.1} can take infinite. Our methods at present are not able to tackle the high dimension problem.
\end{re}

By corresponding results of Sullivan \cite[Proposition 23]{Su94}, we know that strict Hadwiger numbers in one-dimensional and two-dimensional Euclidean space are 2 and 5. Thus the lower bounds  are greater than $\left(1+\frac{1}{2(p-1)}\right)^{1\slash p}$ and $\left(1+\frac{1}{5(p-1)}\right)^{1\slash p}$ respectively, and this verifies Theorem \ref{thm:1.3}.  We ignore the details of this proof since it is the direct inference of Sullivan's proposition, Corollary \ref{cor:1.2} and Lemma \ref{lem:4.1}.

 We would like to mention that in one dimension we can improve the lower bound estimation to $\left(\frac{p}{p-1}\right)^{1\slash p}$. 
The core of our proof is exactly consistent with Lerner's approach under one-dimensional Lebesgue measures.
To this end, we introduce the following theorem proved by  Ephremidze et al.\cite[Theorem 1]{EFT07}.

\begin{thm}\label{thm:4.2}
 Let $\mu$ be a Borel semi-regular measure on $\mathbb{R}$ that contains no atoms. For $f\geq 0$, we consider the one sided maximal operator \begin{equation*}
 	M_+f(x)= \sup_{b>x}\frac{1}{\mu([x,b))}\int_{[x,b)}f(s)d\mu.
 \end{equation*} If 
$
 t > \liminf\limits_{x \to -\infty}M_+f(x), 
$ then \begin{equation*}
	t\mu(\{x\in\mathbb{R}:M_+f(x)>t\}) = \int_{\{x\in\mathbb{R}:M_+f(x)>t\}}f(s)d\mu.
\end{equation*}
\end{thm}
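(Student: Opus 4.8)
The plan is to recognize the claimed identity as an instance of F.~Riesz's \emph{rising sun lemma}, adapted from Lebesgue measure to the atomless Borel measure $\mu$. Fix $t>0$ and a reference point $x_0\in\mathbb{R}$, and introduce the primitive
\[
G(x)=\int_{[x_0,x)}(f-t)\,d\mu\qquad(x\ge x_0),
\]
extended to $x<x_0$ by $G(x)=-\int_{[x,x_0)}(f-t)\,d\mu$. The first observation is purely algebraic: for every $x$ one has $M_+f(x)>t$ precisely when there is some $b>x$ with $\int_{[x,b)}(f-t)\,d\mu>0$, that is, with $G(b)>G(x)$. Hence the strict superlevel set $\{x:M_+f(x)>t\}$ coincides with the \emph{shadow set}
\[
\Omega=\{x\in\mathbb{R}:\exists\,\xi>x\ \text{with}\ G(\xi)>G(x)\}
\]
of $G$, and the whole theorem is reduced to proving $t\mu(\Omega)=\int_\Omega f\,d\mu$.

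Next I would exploit the hypothesis that $\mu$ has no atoms. Then $x\mapsto\mu([x_0,x))$ is continuous and non-decreasing, so $G$ is continuous; consequently $\Omega$ is open and decomposes into an at most countable family of pairwise disjoint component intervals $(a_k,b_k)$. The core of the argument is the rising sun lemma applied to the continuous function $G$: on each \emph{bounded} component one has $G(a_k)=G(b_k)$. Indeed, $a_k\notin\Omega$ forces $G(\xi)\le G(a_k)$ for all $\xi>a_k$, so in particular $G(b_k)\le G(a_k)$; conversely, a maximum argument on $[x,b_k]$ for $x\in(a_k,b_k)$ shows that the maximum of $G$ there must be attained at $b_k$ (an interior maximizer would fail to lie in $\Omega$), whence $G(x)\le G(b_k)$ and, letting $x\to a_k^+$, $G(a_k)\le G(b_k)$. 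The equality $G(a_k)=G(b_k)$ is exactly $\int_{[a_k,b_k)}f\,d\mu=t\mu([a_k,b_k))$, i.e.\ the average of $f$ over the half-open interval equals $t$.

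It then remains to sum these local balance identities and to control the unbounded components, which is where the remaining hypothesis enters. The condition $t>\liminf_{x\to-\infty}M_+f(x)$ guarantees arbitrarily negative points $x$ with $M_+f(x)<t$, i.e.\ points $x\notin\Omega$ accumulating at $-\infty$, which rules out a left-unbounded component $(-\infty,b_k)$ whose ``balance'' $G(-\infty)=G(b_k)$ would be meaningless. A right-unbounded component $(a_k,+\infty)$ I would treat separately: the same rising sun reasoning shows that the future supremum $S(x)=\sup_{\xi\ge x}G(\xi)$ is constant and equal to $G(a_k)$ on the component, forcing $\limsup_{x\to+\infty}G(x)=G(a_k)$ and hence the same balance in the limit. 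Once every component satisfies $\int_{[a_k,b_k)}f\,d\mu=t\mu([a_k,b_k))$, I would sum over $k$ and use that the left endpoints $\{a_k\}$ form a countable, hence $\mu$-null, set (atomlessness) to replace the half-open intervals by the open components, obtaining $\int_\Omega f\,d\mu=\sum_k t\mu([a_k,b_k))=t\mu(\Omega)$, as required.

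The step I expect to be the genuine obstacle is the rigorous passage of the rising sun lemma from Lebesgue measure to a general atomless Borel measure, together with the careful treatment of the unbounded components. The cleanest route is probably to pass through the continuous, non-decreasing distribution function $\Phi(x)=\mu([x_0,x))$ and change variables so that $\mu$ becomes Lebesgue measure on the range of $\Phi$; the flat pieces of $\Phi$ correspond to $\mu$-null intervals and can be collapsed, but one must check that this identification disturbs neither the shadow set $\Omega$ nor the endpoint values of $G$. Verifying that the limiting balance genuinely holds on a right-unbounded component---where $\int_{[a_k,\infty)}(f-t)\,d\mu$ is only a conditional limit and $\lim_{x\to+\infty}G(x)$ need not exist a priori---is the most delicate point, and is exactly where the sign assumption $f\ge 0$ and the precise meaning of $\liminf_{x\to-\infty}M_+f$ must be invoked with care.
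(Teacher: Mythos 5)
Your proposal is correct, but there is nothing in the paper to compare it against: the paper does not prove this statement at all, importing it verbatim from Ephremidze--Fujii--Terasawa \cite{EFT07} (their Theorem 1), so your blind attempt is a genuine reconstruction rather than a variant of an internal argument. Your route --- the classical Riesz rising-sun mechanism applied to the shadow set of the continuous primitive $G(x)=\int_{[x_0,x)}(f-t)\,d\mu$ --- does go through. The identification $\{M_+f>t\}=\Omega$ is exact under the paper's convention that averages over $\mu$-null sets are $0$; continuity of $G$ (hence openness of $\Omega$) uses atomlessness together with the implicit standing assumptions that $\mu$ is locally finite and $f\in L^1_{\mathrm{loc}}(\mu)$, which you should state but which are automatic in the paper's application (there $\mu$ is Radon and $f\in L^1(\mu)$ with bounded support). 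Your handling of the two delicate points is sound: the hypothesis $t>\liminf_{x\to-\infty}M_+f(x)$ supplies points $x_n\to-\infty$ with $M_+f(x_n)<t$, i.e.\ outside $\Omega$, which excludes a component $(-\infty,b)$; and on a right-unbounded component $(a_k,\infty)$, since no finite point of $[x,\infty)$ can maximize $G$ (a maximizer would lie in $\Omega$ and its witness would violate either maximality or $a_k\notin\Omega$), one gets $\sup_{\xi\ge x}G(\xi)=\limsup_{\xi\to\infty}G(\xi)=G(a_k)$. At that last step the sign hypothesis $f\ge 0$ is exactly what you need: both $\xi\mapsto\int_{[a_k,\xi)}f\,d\mu$ and $\xi\mapsto t\mu([a_k,\xi))$ are nondecreasing, so $\limsup_{\xi\to\infty}\bigl(G(\xi)-G(a_k)\bigr)=0$ forces their limits to agree in $[0,\infty]$ (both finite and equal, or both infinite), which is the limiting balance you invoke. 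The final summation is also fine: the half-open intervals $[a_k,b_k)$ are pairwise disjoint even when adjacent components share an endpoint, and the countable set of endpoints is $\mu$-null by atomlessness. The change-of-variables detour through $\Phi(x)=\mu([x_0,x))$ that you sketch as a fallback is unnecessary --- your direct argument already carries the rising-sun lemma to arbitrary atomless measures, which is in substance how \cite{EFT07} proceeds.
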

\begin{proof}[Proof of Theorem \ref{thm:1.4}.]
  If $\mu(\mathbb{R})=\infty$, we have either $\mu((0,\infty))=\infty$ or $\mu((-\infty,0])=\infty$. Without loss of generality, we assume $\mu((-\infty,0])=\infty$. Otherwise we consider 
  $
  M_-f(x)= \sup\limits_{a<x}\frac{1}{\mu((a,x])}\int_{(a,x]}f(s)d\mu
$  instead of $M_{+}f(x)$. As $\mu$ is Radon, $\mu((-\infty,-R))=\mu((-\infty,0])-\mu([-R,0])=\infty$ for any $R>0$. Assume that the function $f\in L^1(\mu)$ with $\operatorname {supp} (f)\subseteq[-R,R]$. If $x<-R$, we have 
  \begin{equation*}
  	M_+f(x)= \sup_{b>-R}\frac{1}{\mu([x,b))}\int_{[x,b)}f(s)d\mu\leq\sup\limits_{b>-R}\frac{\Vert f\Vert_1}{\mu([x,b))}\leq\frac{\Vert f\Vert_1}{\mu([x,-R))}.
  \end{equation*}From $\mu((-\infty, -R))=\infty$, we obtain 
\begin{equation*}
		\liminf\limits_{x \to -\infty}M_+f(x) =	\liminf\limits_{x \to -\infty}\frac{\Vert f\Vert_1}{\mu([x,-R))}= 0. 
\end{equation*} 
 Applying Theorem \ref{thm:4.2}, we conclude
\begin{equation}\label{eq:4.1}
 	t\mu(\{x\in\mathbb{R}:M_+f(x)>t\}) = \int_{\{x\in\mathbb{R}:M_+f(x)>t\}}f(s)d\mu \;\;\text{for all $t>0$}.
 \end{equation}
Now multiplying both sides of \eqref{eq:4.1} by $t^{p-2}$ and integrating with respect to $t$ over $(0,\infty)$, we obtain $\frac{\Vert M_+f\Vert_p^p}{p}=\frac{1}{p-1}\int_{\mathbb{R}}f(M_+f)^{p-1}d\mu$. Therefore, $\frac{\Vert Mf\Vert_p^p}{p}\geq\frac{1}{p-1}\int_{\mathbb{R}}f^pd\mu$ and the desired inequality follows. 
\end{proof}

For one-dimensional Banach space, there are more powerful covering theorems to ensure the weak type inequality with respect to arbitrary measures, as was done by Peter Sjogren in \cite{Sj83}. See also \cite{Be89} and \cite{GK98}. Hence the maximal operator $M$ satisfies
the strong type $L^p(\mu)$ estimates for $1<p<\infty$. The function family of the following criterion is easier to process in comparison with Lemma \ref{lem:3.4}.

\begin{lem}\label{lem:4.3}
	Let $\mu$ be a Borel semi-regular measure on $\mathbb{R}$ and let $C$ be a constant.  The following are equivalent: \begin{enumerate}[\rm(i)]
		\item $\Vert Mf\Vert_p\geq C \Vert f\Vert_p$ for all $f\in L^p(\mu)$.
		\item $\Vert Mg\Vert_p\geq C\Vert g\Vert_p$ for all  $g$ of form $g=\sum_{i=1}^N\beta_i\mathbf{1}_{I_i}$, where the bounded open intervals $I_i$ are disjoint and $\beta_i>0$.
	\end{enumerate}
\end{lem}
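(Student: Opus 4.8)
The plan is to treat the two implications asymmetrically. The implication (i)$\Rightarrow$(ii) is immediate: every function $g=\sum_{i=1}^N\beta_i\mathbf 1_{I_i}$ of the stated form lies in $L^p(\mu)$, since each $I_i$ is a bounded interval and $\mu$ is locally finite, so $\mu(I_i)<\infty$; thus (i) applies verbatim. All the content is in (ii)$\Rightarrow$(i), and here I would exploit the feature that distinguishes the real line from the general setting of Lemma \ref{lem:3.4}, namely that $M$ is of strong type $(p,p)$ on $L^p(\mu)$ for every Borel semi-regular measure on $\mathbb R$ and every $1<p<\infty$ (this is the content of the remark preceding the lemma, obtained from Sj\"ogren's covering theorem \cite{Sj83}). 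Because this boundedness is at our disposal, the delicate monotone approximation of Lemma \ref{lem:3.4} can be replaced by mere $L^p$-norm approximation, which is far more flexible and is precisely why this criterion is ``easier to process''.

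Concretely, fix a non-negative $f\in L^p(\mu)$ (we may assume $f\ge 0$ since $M$ depends only on $|f|$). First I would show that functions of the form (ii) are dense in the non-negative cone of $L^p(\mu)$. Simple functions are dense, so it suffices to approximate a single $\mathbf 1_B$ with $\mu(B)<\infty$; after intersecting with a large ball we may take $B$ bounded, and by outer regularity one finds an open $U\supseteq B$ with $\mu(U\setminus B)$ small. Since every open subset of $\mathbb R$ is a countable disjoint union of open intervals, truncating to finitely many of them produces a finite disjoint union $U'$ of bounded open intervals with $\mu(B\triangle U')$ small. Assembling these approximants over the finitely many pieces of a simple function and rewriting the resulting step function with pairwise disjoint open intervals and positive heights yields the desired $g_n\to f$ in $L^p(\mu)$.

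The passage to the limit is then the crux, and it is short. For non-negative functions the sublinearity of $M$ gives the pointwise bound $|Mg_n-Mf|\le M|g_n-f|$, and the strong-type estimate yields $\Vert Mg_n-Mf\Vert_p\le \Vert M\Vert_{L^p\to L^p}\,\Vert g_n-f\Vert_p\to 0$. Hence simultaneously $\Vert g_n\Vert_p\to\Vert f\Vert_p$ and $\Vert Mg_n\Vert_p\to\Vert Mf\Vert_p$ (the latter by the reverse triangle inequality, using $Mf\in L^p$), so letting $n\to\infty$ in the hypothesis $\Vert Mg_n\Vert_p\ge C\Vert g_n\Vert_p$ gives $\Vert Mf\Vert_p\ge C\Vert f\Vert_p$, which is (i).

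I expect the main obstacle to be purely bookkeeping in the density step: rewriting a finite combination of indicators of overlapping or abutting intervals as a genuine $\sum_i\beta_i\mathbf 1_{I_i}$ with disjoint open $I_i$ and $\beta_i>0$ forces one to discard shared endpoints, and if $\mu$ carries atoms there this must be absorbed into the error (for instance by nudging the endpoints to avoid the at most countably many atoms, which only improves the $L^p$ approximation). The genuinely essential ingredient, by contrast, is the strong-type boundedness of $M$: it is exactly what permits us to bypass the monotone-approximation machinery of Corollary \ref{cor:3.2} and Lemma \ref{lem:3.4}, and it is the reason the criterion is available on $\mathbb R$ but not in the general metric setting.
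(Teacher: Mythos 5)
Your proposal is correct and follows essentially the same route as the paper: the paper's (terse) proof likewise derives (ii)$\Rightarrow$(i) from the strong type $(p,p)$ boundedness of $M$ on $L^p(\mu)$ (via Sj\"ogren's covering theorem), the resulting continuity $Mf_n\to Mf$ in $L^p$ as $f_n\to f$, and the density in $L^p(\mu)$ of positive linear combinations of indicators of bounded open intervals. You merely supply the details (outer regularity, the disjointification bookkeeping, the bound $|Mg_n-Mf|\le M|g_n-f|$) that the paper leaves implicit, so there is nothing to correct.
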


\begin{proof}
	Suppose that (ii) is invalid. 
	Let $f\in
	L^p(\mu)$. Since the sub-linear operator $M$ is bounded on $L^p(\mu)$, we have  $Mf_n \to  Mf$ as $f_n \to f$. Thus (i) follows from the fact that the set  of all positive coefficients linear combinations of  characteristic functions of bounded open intervals is dense. 
\end{proof}

	Let $A_\mu$ be the set of all the real numbers $x$ with
	$\mu(\{x\})>0$. If we denote by $x_n$ the points
	of $A_\mu$ and by $w_n$ the weight of $x_n$, the decomposition follows: $\mu =\mu_c+\sum_{n}w_n\delta_{x_n}$, where the continuous part of $\mu$: $\mu_c$ is defined by
	$\mu_c(B)=\mu(B\cap A^c)$. This brings us nicely to consider the case of that $\mu_c(\mathbb{R})=\infty.$

\begin{thm}\label{thm:4.4} Let $\mu$ be a Borel semi-regular measure on $\mathbb{R}$ and suppose that the axis of negative and positive reals have measures infinite.
	If one of the following holds:
	\begin{enumerate}[\rm(i)]
	\item 	the set $A_\mu$ is one point;
		\item the set $A_\mu$ contains only two points and there is no mass between these two points; 
	\end{enumerate} then 
	\[
	\Vert M_\mu f \Vert_p \geq \left(\frac{p}{p-1}\right)^{\frac{1}{p}}\Vert f\Vert_p \quad\text{for all $f\in L^p(\mu)$}.
	\]
\end{thm}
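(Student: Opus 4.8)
The plan is to deduce the estimate from the ``pairwise disjoint'' case of Theorem \ref{thm:3.6}, so that the whole problem collapses to a one-dimensional covering construction. First I would apply Lemma \ref{lem:4.3} to reduce to test functions of the form $g=\sum_{i=1}^{N}\beta_i\mathbf{1}_{I_i}$ with $\beta_i>0$ and the $I_i$ bounded, open and pairwise disjoint; such a $g$ is bounded with $\mu(\operatorname{supp}(g))<\infty$, hence $g\in L^1(\mu)$. For a fixed level $t>0$ the set $\{g>t\}$ is a finite disjoint union of bounded open intervals, and $L^+_t(g)$ is its intersection with $\operatorname{supp}(\mu)$. The goal is then, for every $t>0$, to produce a countable family of \emph{pairwise disjoint} open intervals covering $L^+_t(g)$ on each of which the $\mu$-average of $g$ equals exactly $t$; Theorem \ref{thm:3.6} will then give $\Vert Mg\Vert_p\geq\left(\frac{p}{p-1}\right)^{1/p}\Vert g\Vert_p$, and Lemma \ref{lem:4.3} upgrades this to all of $L^p(\mu)$.

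The engine is the fact that, away from the atoms, the $\mu$-measure of an interval depends continuously on its endpoints, while the hypothesis that both rays carry infinite mass forces the $\mu$-average of $g$ over an interval to tend to $0$ as the interval is enlarged into either ray (its numerator is bounded by $\Vert g\Vert_1$). Each block of $\{g>t\}$ has $\mu$-average strictly larger than $t$, so enlarging it monotonically into a ray along which $\mu$ is continuous produces, by the intermediate value property, an interval of $\mu$-average exactly $t$. To obtain disjointness I would run this enlargement as a single sweep (a rising-sun / greedy stopping-time argument): process the blocks from left to right and, whenever two average-$t$ intervals would overlap, replace them by their convex hull and re-enlarge that hull into the available ray until its average returns to $t$. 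When $\mu$ has no atoms this is exactly the mechanism behind Theorem \ref{thm:1.4}: writing $F(x)=\int_{(-\infty,x)}(g-t)\,d\mu$, one has $F(x)\to-\infty$ at $\pm\infty$, and the components of $\{M_+g>t\}=\{x:\exists\,y>x,\ F(y)>F(x)\}$ are disjoint bounded intervals of average exactly $t$ that already cover $L^+_t(g)$.

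The only thing that can break this is a jump of $F$, equivalently a discontinuity of an interval's average, when an endpoint crosses an atom; this is the \emph{main obstacle}, and the sole place where the hypotheses enter. Crossing an atom $a$ changes the average discontinuously by incorporating the point mass $(g(a)-t)\mu(\{a\})$, and if $g(a)<t$ this downward jump can carry the average below $t$, so that no interval with a moving endpoint at $a$ has average exactly $t$. The repair is to spend the continuous slack on the opposite side: after an enlargement overshoots $t$ across the atom, I would freeze the endpoint just past $a$ and retract the other endpoint through the (atom-free, hence continuous) part of the measure until the average climbs back up to exactly $t$, which is possible precisely because that side still carries continuous, and ultimately infinite, mass. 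With a single atom (case (i)) there is always such a continuous side, so every block is covered by an exact-average-$t$ interval while the merging keeps the family disjoint. With two atoms (case (ii)) the assumption $\mu((a_1,a_2))=0$ is exactly what decouples them: the open gap carries no mass and hence no point of $L^+_t(g)$, so no interval is ever pinned between the two atoms, and each atom is crossed with continuous slack available outside $[a_1,a_2]$, reducing the situation to two independent single-atom repairs. A third atom, or positive mass strictly between two atoms, could instead trap a block between two point masses with no continuous room to retune to the exact level $t$, which is why the method is confined to these configurations. Once the disjoint exact-average-$t$ cover has been built for every $t>0$, Theorem \ref{thm:3.6} together with Lemma \ref{lem:4.3} completes the proof.
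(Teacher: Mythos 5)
Your overall architecture is exactly the paper's: reduce via Lemma \ref{lem:4.3} to step functions $g=\sum_{i=1}^N\beta_i\mathbf{1}_{I_i}$, construct for every $t>0$ a pairwise disjoint family of intervals of exact $\mu$-average $t$ covering $L^+_t(g)$, and conclude by the disjoint case of Theorem \ref{thm:3.6}. The genuine gap is in your atom-crossing ``repair''. Take case (i) with a single atom $a$ satisfying $g(a)=0$, a block $B=(u,v)$ of $\{g>t\}$ with $v<a$ and $g\equiv\beta>t$ on $B$, and $\mu(\{a\})$ large. Once you freeze the right endpoint just past $a$, every interval $I\ni a$ whose superlevel part lies in $B$ has average at most $\beta\,\mu(B)/\bigl(\mu(B)+\mu(\{a\})\bigr)$, which is $<t$ as soon as $\mu(\{a\})>\mu(B)(\beta-t)/t$; moreover, retracting the left endpoint from $u$ into $B$ removes mass whose values $\beta$ exceed the current average and therefore \emph{decreases} the average, while extending it further left adds zero values and decreases it as well. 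So the average of any interval containing the atom is pinned strictly below $t$, your retraction can never make it ``climb back up'', and the level $t$ is simply unattainable once the atom is swallowed. The correct move --- and what the paper actually does --- is never to let a moving endpoint cross an atom lying outside the superlevel set: split $\mathbb{R}$ at $a$, expand the blocks left of $a$ \emph{leftward} into the atom-free ray of infinite mass and the blocks right of $a$ rightward, so the moving endpoint always travels through continuous measure and the intermediate value property applies; the atom only ever enters an interval when it sits inside a superlevel block, in which case $g(a)>t$ makes the jump upward and harmless (the paper handles this by starting the leftward sweep at the right endpoint of the block containing the atom). In case (ii), the hypothesis that there is no mass between the two atoms is used just as you intuit: it splits the line into two half-lines, each carrying a one-atom problem.

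A second, smaller defect is your disjointification step. If two average-$t$ intervals $I,J$ overlap, then $\int_{I\cup J}(g-t)\,d\mu=-\int_{I\cap J}(g-t)\,d\mu$, which is negative whenever the overlap carries above-level mass; the convex hull can therefore start with average strictly below $t$, and ``re-enlarging'' it only drives the average lower, by the same monotonicity as above. The paper avoids merging altogether by generating the intervals sequentially from right to left on each side of the atom: whenever the stopping point $s_{i-1}$ of one interval lands inside the superlevel set, the next interval $(s_i,s_{i-1})$ is grown from that very point, so consecutive intervals share only an endpoint and the family is born pairwise disjoint. With these two mechanisms replaced --- expand away from sub-level atoms instead of crossing and retracting, and chain the intervals instead of hull-and-regrow --- your reduction-plus-covering plan becomes precisely the paper's proof.
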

\begin{proof}
	We claim that the ball-coverings family assumed in Theorem \ref{thm:3.6} exists for each step function, then we justify the theorem.
		Let $f=\sum_{i=1}^N\beta_i\mathbf{1}_{I_i}$, where the bounded open intervals $I_i$ are disjoint and $\beta_i>0$. If necessary, we rearrange the intervals $I_i$ to ensure that it is mutually disjoint successively. Meanwhile, we also relabel the series $\beta_i$, that is $\beta_{i_1}\leq \beta_{i_2}\leq \dots \leq \beta_{i_N}$, where $(i_1,\dots i_N)$ is a permutation of $(1,\dots, N)$. Suppose first (i) holds and write $A_\mu=\{y\}$.
	
We first consider the case when $y\notin \cup_{1\leq i\leq N}I_i$. Now suppose $0<t<\beta_{i_N}$,  otherwise the corresponding superlevel set is nonempty. We define $j_t$ to be the first number in $\{\beta_{i_j}\}_{1\leq j\leq N}$ skipping over $t$, then $\{x\in X: f(x)>t\}= \cup_{j\geq j_t}I_{i_j}.$ Now we apply a standard selection
	procedure: since $y\notin \cup_{1\leq i\leq N}I_i$, $(-\infty,y]$ and $(y,+\infty)$ cut apart the intervals family $\cup_{j\geq j_t}I_{i_j}$ into the two parts, then we write $b_1$ be the right endpoint of the last interval in the list which lies on the left side of $y$ (the exception is considered later); thus there exists a ball $B_1=(s_1,b_1)$ such that $\langle f\rangle_{B_1}=t$, since $\langle f\rangle_{(s,b_1)}$ is continuous, $\lim_{s\to b_1}\langle f\rangle_{(s,b_1)}>t$ and $\lim_{s\to -\infty}\langle f\rangle_{(s,b_1)}=0$ which all stems from the assumption that $\mu|_{[-\infty,y)}$ is an infinite continuous measure; if $s_1\notin \cup_{j\geq j_t}I_{i_j}$, we  choose it; otherwise, using a while loop,  we will find a new ball $B_s=(s_{i},s_{i-1})$ to meet $\langle f\rangle_{B_s}=t$ until $s_i\notin \cup_{j\geq j_t}I_{i_j}$, then we select $(s_i,b_1)$ be the ball $B_1$ as desired; having disposed of this step, we use induction to find the family of balls which covers  the part of $\cup_{j\geq j_t}I_{i_j}$ belonging to $(-\infty,y]$. For the intervals family on the right side of $y$, we do the same by consider the ball in $(y,+\infty)$. It was clear that the balls chosen was what we needed.
	
	We now turn to the case when $y\in \cup_{1\leq i\leq N}I_i$. Then we suppose $I_{i_y}\ni y$ for some $i_y\in (1,\dots, N)$. The proof of this case is quite similar to the former  since we can choose the right endpoint of $I_{i_y}$ as the starting point of the selection
	procedure.
	
	The same reasoning applies to the case (ii).
\end{proof}

The following example indicates that a one-sided infinite measure containing only one atom can also lead to the phenomenon of Example \ref{ex:4.1}.

\begin{ex}
	For $t>1$, consider $\mu= t\delta_{1}+m|_{(0,\infty)},$ where $m$ is the Lebesgue measure. Obviously, the examples satisfy $\mu(\mathbb{R})=\infty$. Letting $f=\mathbf{1}_{(0,1)}$, we will show that we can take large $t $ such that $\Vert M_{\mu}f\Vert_{p,\mu}$ gets close enough to $1$. If $x\in (0,1)$, $M_\mu f(x)=1$; if  $x\in [1,\infty)$, then
\begin{equation*}
	M_\mu 
	f(x)=\sup\limits_{a<x}\frac{1}{\mu((a,x])}\int_{(a,x]}f(s)d\mu=\sup\limits_{0<a\leq1}\frac{1-a}{t+x-a}=\frac{1}{t+x}.
\end{equation*}
Thus, we have
\begin{align*}
	\Vert M_{\mu}f \Vert^p_{p,\mu}=&\Vert M_{\mu}f \Vert^p_{p,m|_{(0,\infty)}}+\Vert M_{\mu}f \Vert^p_{p,t\delta_1}\\
	=&1+\int_{1}^{\infty}\frac{1}{(t+x)^p}dx+\frac{t}{(t+1)^p} \\
	\leq& 1+\frac{p}{p-1} (t+1)^{1-p}.
\end{align*}
Since $\lim\limits_{t\to \infty}\Vert M_{\mu}f\Vert_{p,\mu} =1$, the result follows.
\end{ex}

\begin{re}
	Another similar example shows that in general Theorem \ref{thm:4.4} cannot be popularizing to the case where $A_\mu$ contains more than three atoms.
\end{re}

Finally, we comment on a result that the lower $L^p$-bound of $M$ becomes significantly increased with the decreases of $p$.

% The result shows that if the lower $L^r$-bound of the uncentered Hardy-Littlewood maximal operator is strictly greater than $1$, then so is $L^p$-bound  for all $1<p<r$. 
Since it is immediate consequence of H{\"o}lder's inequality, the proof is not shown here.
%Let $(X,d,\mu)$ be a metric measure
%space and $p>1$. If $f\in L^p_{loc}(\mu)$, then $ \left(Mf(x)\right)^p \leq M(|f|^p)(x)$.
%\end{prop}
%\begin{proof}
	%Let $q=\frac{p}{p-1}$. Then applying H{\"o}lder's inequality, we have \begin{equation*}
	%	\langle f\rangle_{B} \leq \frac{\left(\int_{B}|f|^pd\mu\right)^{1/p}\left(\int_{B}1^qd\mu\right)^{1/q}}{\mu(B)}\leq\left(\frac{\int_{B}|f|^pd\mu}{\mu(B)}\right)^{\frac{1}{p}}\leq \left(\langle |f|^p\rangle_{B}\right)^{\frac{1}{p}}.
%	\end{equation*}
%Therefore 
%	\begin{equation*}
%	Mf(x)=\sup_{x\in B}\langle f\rangle_{B}\leq \sup_{x\in B}\left(\langle |f|^p\rangle_{B}\right)^{\frac{1}{p}}\leq \left(\sup_{x\in B}\langle |f|^p\rangle_{B}\right)^{\frac{1}{p}}\leq \left(M(|f|^p)(x)\right)^{\frac{1}{p}}.
%	\end{equation*}
% Exponentiating both sides by $p$, the proposition follows.
%\end{proof}
\begin{prop}
Let $(X,d,\mu)$ be a metric measure
space and $M_r = \inf\limits_{\Vert g \Vert_r=1}\Vert Mg\Vert_r$, then $M_r \leq M_p^{\frac{p}{r}}$ for $1<p<r$.
\end{prop}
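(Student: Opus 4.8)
The plan is to exhibit, for any near-minimizer of the $L^p$ problem, an admissible competitor for the $L^r$ problem obtained by raising it to a suitable power, and then to transfer the $L^p$-bound through a single application of H\"older's inequality. Since $M$ acts on $|g|$, I may assume throughout that all functions are non-negative, because $Mg = M|g|$ and $\Vert g\Vert_p = \Vert\, |g|\,\Vert_p$. Write $\theta = p/r \in (0,1)$. Fix $g\geq 0$ with $\Vert g\Vert_p = 1$ and set $f = g^{\theta}$, so that
\[
\Vert f\Vert_r^r = \int_X g^{\theta r}\, d\mu = \int_X g^{p}\, d\mu = \Vert g\Vert_p^{p} = 1,
\]
and hence $f$ is admissible for the infimum defining $M_r$.

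The key step is the pointwise domination $Mf \leq (Mg)^{\theta}$. This follows from H\"older's inequality applied on each ball: for any ball $B$ with $\mu(B)>0$, using the conjugate exponents $1/\theta$ and $1/(1-\theta)$,
\[
\frac{1}{\mu(B)}\int_B g^{\theta}\, d\mu \leq \frac{1}{\mu(B)}\left(\int_B g\, d\mu\right)^{\theta}\mu(B)^{1-\theta} = \langle g\rangle_B^{\theta}.
\]
Taking the supremum over all balls $B\ni x$ and using that $t\mapsto t^{\theta}$ is increasing gives $Mf(x)\leq (Mg(x))^{\theta}$ for every $x$. Raising to the $r$-th power and integrating then yields
\[
\Vert Mf\Vert_r^r \leq \int_X (Mg)^{\theta r}\, d\mu = \int_X (Mg)^{p}\, d\mu = \Vert Mg\Vert_p^{p},
\]
so that $\Vert Mf\Vert_r \leq \Vert Mg\Vert_p^{p/r}$. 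Since $f$ is admissible, $M_r \leq \Vert Mf\Vert_r \leq \Vert Mg\Vert_p^{p/r}$.

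To conclude, I would take the infimum over all non-negative $g$ with $\Vert g\Vert_p = 1$ (equivalently, run the estimate along a minimizing sequence $g_n$ with $\Vert M g_n\Vert_p \to M_p$); since $t\mapsto t^{p/r}$ is continuous and increasing, this gives $M_r \leq M_p^{p/r}$, as claimed. As for obstacles, there is essentially none of substance: the entire argument is driven by the substitution $f = g^{p/r}$ together with one use of H\"older. The only points deserving a word of care are the reduction to non-negative functions noted above and the passage to the infimum, since $M_p$ need not be attained; the latter is handled simply by working with a minimizing sequence rather than an exact minimizer.
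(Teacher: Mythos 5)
Your proof is correct and follows exactly the route the paper intends: the paper omits the argument precisely because it is an ``immediate consequence of H\"older's inequality,'' and your substitution $f=g^{p/r}$ with the ball-by-ball H\"older estimate $\langle g^{\theta}\rangle_B\leq\langle g\rangle_B^{\theta}$, followed by passing to a minimizing sequence, is that standard argument carried out in full. The reductions you flag (to non-negative $g$, and to a minimizing sequence since $M_p$ need not be attained) are handled correctly.
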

%\begin{proof}
%	Suppose first $r<\infty$, otherwise the proposition  follows by $\Vert M(f)\Vert_{\infty}\leq\Vert f \Vert_{\infty}.$ If $\Vert f\Vert_{p}=1$, we now claim that there exists a function $g$ with $\Vert g\Vert_{r}=1$ and $\Vert M g\Vert_r\leq \Vert Mf\Vert^{\frac{p}{r}}_p$. In fact, let $g=|f|^{\frac{p}{r}}$, then
%	$\Vert g\Vert^r_r = \int g^rd\mu =\Vert f \Vert^p_p$, and hence  $\Vert g\Vert_{r}=1$. Then for $\frac{r}{p}> 1$, using obtained proposition \ref{pr:1}, we get
%	\begin{equation*}
%		Mf(x)=M(g^\frac{r}{p})(x)\geq \left(Mg(x) \right)^{\frac{r}{p}}.
%	\end{equation*}
%Therefore 
%	\begin{equation*}
%	\Vert Mf\Vert_p=\left(\int(Mf)^p\right)^{\frac{1}{p}}\geq \left(\int(Mg)^r\right)^{\frac{1}{p}}\geq\Vert Mg\Vert^{\frac{r}{p}}_r.
%	\end{equation*}
%The proposition follows by combing these and the definition of $M_r$.
%\end{proof} 

\end{document}